\newtheorem{prethm}{{\bf Theorem}}[section]
\newenvironment{thm}{\begin{prethm}{\hspace{-0.5
em}{\bf.}}}{\end{prethm}}
\newtheorem{prepro}{{\bf Theorem}}
\newtheorem{precor}[prethm]{{\bf Corollary}}
\newenvironment{cor}{\begin{precor}{\hspace{-0.5
em}{\bf.}}}{\end{precor}}
\newtheorem{preconj}[prethm]{{\bf Conjecture}}
\newenvironment{conj}{\begin{preconj}{\hspace{-0.5
em}{\bf.}}}{\end{preconj}}
\newtheorem{preremark}[prethm]{{\bf Remark}}
\newenvironment{remark}{\begin{preremark}\em{\hspace{-0.5
em}{\bf.}}}{\end{preremark}}
\newtheorem{prelem}[prethm]{{\bf Lemma}}
\newenvironment{lem}{\begin{prelem}{\hspace{-0.5
em}{\bf.}}}{\end{prelem}}
\newtheorem{preque}[prethm]{{\bf Question}}
\newtheorem{preobserv}[prethm]{{\bf Observation}}
\newtheorem{preproposition}[prethm]{{\bf Proposition}}
\newtheorem{preproof}{{\bf Proof.}}
\newtheorem{preprooff}{{\bf Proof}}
\newenvironment{proof}[1]{\begin{preproof}{\rm
#1}\hfill{$\Box$}}{\end{preproof}}
\newtheorem{preproofs}{{\bf Second proof of }}
\newtheorem{preprooft}{{\bf Third proof of }}
\newtheorem{preproofF}{{\bf Proof of}}
\title{\bf\Large 
Spanning trees and spanning closed walks with small degrees
}
\author{{\normalsize{\sc Morteza Hasanvand${}$} }\vspace{3mm}
\\{\footnotesize{${}$\it Department of Mathematical
 Sciences, Sharif
University of Technology, Tehran, Iran}}
{\footnotesize{}}\\{\footnotesize{  $\mathsf{hasanvand@alum.sharif.edu }$ }}}
\date{}
\begin{document}
\maketitle
\begin{abstract}{
Let $G$ be a graph and let $f$ be a positive integer-valued function on $V(G)$. In this paper, we show that if for all $S\subseteq V(G)$, $\omega(G\setminus S)<\sum_{v\in S}(f(v)-2)+2+\omega(G[S])$, then $G$ has a spanning tree $T$ containing an arbitrary given matching such that for each vertex $v$, $d_T(v)\le f(v)$, where $\omega(G\setminus S)$ denotes the number of components of $G\setminus S$ and $\omega(G[S])$ denotes the number of components of the induced subgraph $G[S]$ with the vertex set $S$. This is an improvement of several results. Next, we prove that if for all $S\subseteq V(G)$, $\omega(G\setminus S)\le \sum_{v\in S} (f(v)-1)+1$, then $G$ admits a spanning closed walk passing through the edges of an arbitrary given matching meeting each vertex $v$ at most $f(v)$ times. This result solves a long-standing conjecture due to Jackson and Wormald (1990). 
\\
\\
\noindent {\small {\it Keywords}:
\\
Spanning tree;
spanning closed walk;
toughness;
connected factor; 
matching.
}} {\small
}
\end{abstract}
%
%
%
%
%
%
%
%
%
%
%
%
%
%
\section{Introduction}
In this article, all graphs have no loop, but multiple edges are allowed and a simple graph is a graph without multiple edges.
 Let $G$ be a graph. 
The vertex set, the edge set, the maximum degree, and the number of components of $G$ are denoted by $V(G)$, $E(G)$, $\Delta(G)$, and $\omega(G)$, respectively. 
The degree $d_G(v)$ of a vertex $v$ is the number of edges of $G$ incident to $v$.
Let $F$ be a subgraph of $G$.
For  an edge set  $E$, we denote by $F-E$ the graph obtained from $F$ by removing  the edges of $E$ from $F$.
Likewise, we denote by $F+E$ the graph obtained from $F$ by inserting  the edges of $E$ into $F$.
For convenience, we use $e$ instead of $E$ when $E=\{e\}$.
For two edge sets $E_1$ and $E_2$, we also use the notation $E_1+E_2$ for the union of them.
For a vertex $v$, we denote by $d_G(v,F)$ 
the number of edges $uv$ of $G$ such that $u$ and $v$ are not in the same component of $F$.
The graph $F$ is said to be {\bf trivial}, if it has no edge. 
The graph obtained from $G$ by contracting any component of $F$ is denoted by $G/F$. 
Let $S\subseteq V(G)$. 
We denote by $G[S]$ the induced subgraph of $G$ with the vertex set $S$ containing
precisely those edges 
of $G$ whose ends lie in~$S$.
 The graph obtained from $G$ by removing all vertices of $S$ is denoted by $G\setminus S$.
We denote by $G\setminus [S,F]$ the graph obtained from $G$ by removing all edges incident to the vertices of $S$ except the edges of $F$. 
Note that while the vertices of $S$ are deleted in $G\setminus S$, no vertices are removed in $G\setminus [S, F]$.
The set $S$ is called  {\bf independent}, if there is no edge of $G$ connecting vertices in $S$.
We denote by $e_G(S)$ the number of edges of $G$ with both ends in $S$. 
Moreover, the number of edges of $G$ with both ends in $S$ joining different components of $F$ is denoted by $e_G(S,F)$.
Let  $g$ and $f$ be two nonnegative integer-valued functions on $V(G)$.
A {\bf $(g,f)$-factor} of $G$ is a spanning subgraph $H$ such that for each vertex $v$, $g(v)\le d_H(v)\le f(v)$.
For a set $A$ of integers, an {\bf $A$-factor} is a spanning subgraph with vertex degrees in $A$. 
A tree (forest) $T$ is said to be an {\bf $f$-tree ($f$-forest)},
if for each vertex $v$, $d_T(v)\le f(v)$. 
Likewise, an {\bf $f$-walk ($f$-trail)} in a graph refers to a walk (trail)
meeting each vertex $v$ at most $f(v)$ times.
A graph is called {\bf $K_{1,n}$-free}, if it has no induced subgraph isomorphic to the complete bipartite graph $K_{1,n}$. 
For a positive real number $t$, 
a graph $G$ is said to be {\bf $t$-tough}, if $\omega(G\setminus S)\le \max\{1,\frac{1}{t}|S|\}$ for all $S\subseteq V(G)$. 
A graph is called {\bf$k$-tree-connected}, if it has $k$ edge-disjoint spanning trees. 
Throughout this article, all variables 
 $k$ are positive integers.
%
%
%
%
%
%
%

In $1976$ Frank and Gy{\'a}rf{\'a}s investigated orientations of graphs with bounded out-degrees on certain connectivity properties. A special case of their result can conclude the following theorem.
\begin{thm}{\rm (\cite{MR519276, MR2746831})}\label{thm:Frank,Gyarfas}
Let $G$ be a graph with an independent set $X\subseteq V(G)$.
 If for all $S\subseteq X$,
 $\omega(G\setminus S)\le \sum_{v\in S}(f(v)-1)+1$,
then $G$ has a spanning tree $T$ such that for each $v\in X$, $d_T(v)\le f(v)$,
where  $f$ is a positive integer-valued function on $X$.
\end{thm}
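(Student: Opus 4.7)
My plan is to proceed by contradiction. Assume $G$ satisfies the stated inequality but has no spanning tree $T$ with $d_T(v)\le f(v)$ for every vertex $v$. Among all spanning trees of $G$, I would choose $T$ minimizing the total excess $te(T,f)=\sum_{v}\max\{0,d_T(v)-f(v)\}$, breaking ties by a secondary lexicographic criterion to be specified below. By the contradiction hypothesis $te(T,f)\ge 1$, so the set of bad vertices $A_0:=\{v:d_T(v)>f(v)\}$ is nonempty, and I will aim to show that $S:=A:=\{v:d_T(v)\ge f(v)\}$ violates the hypothesis.

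The central tool is an edge-swap lemma at bad vertices. Fix any $v_0\in A_0$, let $d=d_T(v_0)$, let $X_1,\dots,X_d$ denote the components of $T\setminus\{v_0\}$, and let $u_i$ be the tree-neighbor of $v_0$ in $X_i$. Suppose there is an edge $uw\in E(G)$ with $u\in V(X_i)$, $w\in V(X_j)$, $i\ne j$, and both $u,w$ free (that is, $d_T(u)<f(u)$ and $d_T(w)<f(w)$). Then $T':=T+uw-v_0u_i$ is a spanning tree with $te(T',f)\le te(T,f)-1$: the excess at $v_0$ strictly decreases, $u$ and $w$ remain below their thresholds, and the degree of $u_i$ only drops. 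This contradicts the minimality of $T$, so no such cross-edge exists. Consequently, any edge of $G$ joining two free vertices has a $T$-path avoiding every bad vertex.

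Using the standard forest identity I would then compute
\[
\omega(T\setminus A)=\sum_{v\in A}\bigl(d_T(v)-1\bigr)+1-e_T(A),
\]
split $A$ into bad and tight ($d_T=f$) parts, and use $\sum_{v\in A_0}(d_T(v)-1)\ge\sum_{v\in A_0}(f(v)-1)+te(T,f)$ together with $e_T(A)\le e_G(A)$ to obtain
\[
\omega(T\setminus A)\ge\sum_{v\in A}\bigl(f(v)-1\bigr)+te(T,f)+1-e_G(A)\ge\sum_{v\in A}\bigl(f(v)-1\bigr)+2-e_G(A),
\]
which is strictly larger than the right-hand side of the hypothesis at $S=A$. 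The contradiction then follows once I establish the identity $\omega(G\setminus A)=\omega(T\setminus A)$, i.e.\ that no non-tree edge of $G[V\setminus A]$ merges two components of the spanning forest $T\setminus A$.

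The main obstacle is verifying this last identity. The swap lemma of the second paragraph rules out merging edges whose $T$-path contains a bad vertex, but a merging edge whose $T$-path uses only tight vertices is not directly excluded. To close this gap I would refine the choice of $T$ by a secondary potential: among all minimizers of $te$, select one that minimizes the number of non-tree edges of $G[V\setminus A]$ lying between distinct components of $T\setminus A$. I would then argue that a ``tight swap'' (exchanging such an offending edge $uw$ for a tree edge $s s^-$ at a tight vertex $s$ on its $T$-path, where $s^-$ is the neighbor of $s$ on that path lying on the side of $u$) preserves $te(T,f)$ while strictly decreasing this secondary quantity. Checking that tight swaps really do improve the secondary invariant without creating new problematic edges, and that the process terminates, is the delicate bookkeeping that completes the proof.
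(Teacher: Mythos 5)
You have the right framework (minimize the total excess $te(T,f)$ and derive a contradiction from the forest-counting identity), and you have correctly identified the genuine gap: the basic swap at a bad vertex only rules out cross-edges between free vertices whose $T$-path passes through a \emph{bad} vertex, so it does not by itself give $\omega(G\setminus A)=\omega(T\setminus A)$ for $A=\{v:d_T(v)\ge f(v)\}$. However, the fix you sketch --- a secondary potential counting offending cross-edges, reduced by ``tight swaps'' --- is not clearly sound. After a tight swap the vertex $s$ drops out of $A$ and $u$ or $w$ may enter $A$, so the set $A$, the components of $T\setminus A$, and hence the very quantity you are trying to decrease can all change in uncontrolled ways; you acknowledge this yourself, and I do not see how to certify termination or monotonicity without substantial additional work.

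The paper's machinery (Theorem~\ref{thm:Preliminary}) avoids this difficulty by \emph{not} taking $S=A$. Instead it builds $S$ by a fixed-point iteration: $V_1$ is the set of bad vertices, and $V_n$ augments $V_{n-1}$ by those vertices $v$ for which \emph{every} tree in a carefully restricted class $\mathcal{A}(V_{n-1},v)$ --- trees obtainable from $T$ by re-routing only inside the component of $T$ at $v$ after cutting along $V_{n-1}$ --- still has $d(v)\ge h(v)$. The claim proved inductively is exactly the statement you need: any non-tree edge between different components of $T$ cut along $V_{n-1}$ has an endpoint in $V_n$. At the fixed point $S=V_n$ this yields $\omega(G\setminus S)=\omega(T\setminus S)$ directly, and one also gets $S\supseteq\{v:d_T(v)>f(v)\}$ and $d_T(v)\ge f(v)$ on $S$. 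Your counting computation
\[
\omega(G\setminus S)=\omega(T\setminus S)=\sum_{v\in S}(f(v)-1)+te(T,f)+1-e_T(S)\ge\sum_{v\in S}(f(v)-1)+2-e_G(S)
\]
goes through verbatim with this smaller $S$ in place of $A$, because it only uses the three properties above, not $S=A$. So the key missing idea in your proposal is to relax the demand that $S$ be all of $A$ and instead construct $S$ as the closure of the bad set under the swap obstruction; that is precisely what makes the connectivity identity provable without a fragile secondary optimization.
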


In $1989$ Win~\cite{MR998275} established a result related to spanning trees and toughness of graphs, 
 and Ellingham, Nam, and Voss~($2002$) generalized it as the following. Former, Ellingham and Zha~($2000$)~\cite{MR1740929} found the following fact for constant function form. A a consequence, every $1$-tough graph must have a spanning $3$-tree containing an arbitrary given perfect matching.
\begin{thm}{\rm (\cite{MR1871346})}\label{thm:Ellingham,Nam,Voss}
{Let $G$ be a connected 
graph with a spanning forest $F$ of which every component contains at least $c$ vertices.
 Let $f$ be a positive integer-valued function on $V (G)$.
If for all $S\subseteq V(G)$, $\omega(G\setminus S)\le \sum_{v\in S}(f(v)-2)+2$, then $G$ has a spanning tree $T$ containing $F$ such that for each vertex $v$, 
$$d_T(v)\le 
 \begin{cases}
f(v)+d_F(v),	&\text{if $c = 1$};\\ 
f(v)+d_F(v)-1,	&\text{if $c \ge 2$}.
\end {cases}$$
}\end{thm}

Liu and Xu (1998) and Ellingham, Nam, and Voss (2002) independently investigated spanning trees with small degrees in highly edge-connected graphs and found the following theorem. 
\begin{thm}{\rm (\cite{MR1871346, MR1621287})}\label{thm: Liu,Xu; Ellingham, Nam, Voss}
{Every $k$-edge-connected 
simple 
graph $G$ has a spanning tree $T$ such that for each vertex $v$, $d_T(v)\le \lceil \frac{d_G(v)}{k}\rceil+2 $.
}\end{thm}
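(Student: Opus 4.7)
The plan is to derive the theorem as a direct consequence of Theorem~\ref{thm:Ellingham,Nam,Voss}. I would set $f(v)=\lceil d_G(v)/k\rceil+2$ for every $v\in V(G)$ and take $F$ to be the edgeless (trivial) spanning forest of $G$, so that $d_F(v)=0$ and every component of $F$ contains exactly $c=1$ vertex. If the hypothesis of Theorem~\ref{thm:Ellingham,Nam,Voss} holds for this $f$, then the $c\ge 1$ case of that theorem delivers a spanning tree $T$ with $d_T(v)\le f(v)+d_F(v)=\lceil d_G(v)/k\rceil+2$ for every vertex $v$, which is precisely the assertion to be proved.

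The work therefore reduces to verifying that, for every $S\subseteq V(G)$,
$$\omega(G\setminus S)\le \sum_{v\in S}\bigl(f(v)-2\bigr)+2=\sum_{v\in S}\lceil d_G(v)/k\rceil+2.$$
The case $S=\emptyset$ is immediate: $\omega(G)=1\le 2$ since $G$ is connected. For nonempty $S$ I would invoke $k$-edge-connectivity: each component of $G\setminus S$ is joined to $S$ by at least $k$ edges, so the number of edges with exactly one end in $S$ is at least $k\,\omega(G\setminus S)$. Combined with the identity $\sum_{v\in S}d_G(v)=2e_G(S)+(\text{number of edges between } S \text{ and } V(G)\setminus S)$, this gives $k\,\omega(G\setminus S)\le \sum_{v\in S}d_G(v)$, and hence
$$\omega(G\setminus S)\le \frac{1}{k}\sum_{v\in S}d_G(v)\le \sum_{v\in S}\lceil d_G(v)/k\rceil,$$
which is well below the required bound (with slack at least $2$).

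The argument is essentially a one-line reduction, so I do not anticipate any substantive obstacle; the only care needed is to split off the empty-$S$ case, which the edge-connectivity inequality does not cover. The simplicity hypothesis is in fact never used, and the same proof would go through for multigraphs. An alternative route would be to invoke Theorem~\ref{thm:Frank,Gyarfas} directly with $f(v)=\lceil d_G(v)/k\rceil+1$, but then the $-e_G(S)$ term on its right-hand side forces a slightly sharper accounting of edges inside $S$, whereas routing through Theorem~\ref{thm:Ellingham,Nam,Voss} absorbs that slack into the constant $+2$ in $f$ at no cost.
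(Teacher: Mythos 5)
Your reduction is correct. The counting step—$k\,\omega(G\setminus S)\le\sum_{v\in S}d_G(v)$ for nonempty $S$ in a $k$-edge-connected graph—is exactly the right inequality (it is recorded, in the sharper form $\omega(G\setminus S)\le\sum_{v\in S}\tfrac{d_G(v)}{k}-\tfrac{2}{k}e_G(S)$, as the first case of the paper's Lemma~\ref{lem:Omega(GS)}), and feeding it into Theorem~\ref{thm:Ellingham,Nam,Voss} with the trivial spanning forest and $f(v)=\lceil d_G(v)/k\rceil+2$ gives precisely the claim. Your remark that simplicity is never used is also correct; the paper's refinement Theorem~\ref{thm:k-edge} drops that hypothesis.

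For the comparison: the paper does not re-prove Theorem~\ref{thm: Liu,Xu; Ellingham, Nam, Voss}; it cites it and then strengthens it to $d_T(v)\le\lceil(d_G(v)-2)/k\rceil+2$ (Theorem~\ref{thm:k-edge}, re-derived as a case of Theorem~\ref{thm:GS:main}). The paper's route is through Theorem~\ref{thm:Omega(GS)} with $\lambda=2/k$, which retains the $-\tfrac{2}{k}e_G(S)$ term that you discard; keeping that term is exactly what converts the fractional slack into the genuine $-2$ in the numerator of the improved bound. Your route through Theorem~\ref{thm:Ellingham,Nam,Voss} (the $\lambda=0$ case) is simpler and suffices for the stated bound, so the trade-off is simplicity versus sharpness. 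You are also right to be wary of the alternative via Theorem~\ref{thm:Frank,Gyarfas} with $f(v)=\lceil d_G(v)/k\rceil+1$: after substituting the refined counting bound one is left with a residual deficit of $\tfrac{k-2}{k}\,e_G(S)$ against a fixed $+1$ of slack, which does not close for $k\ge 3$ and large $e_G(S)$, so that route really does require the full $\lambda$-machinery rather than a one-line estimate.
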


Recently, the present author (2015) refined Theorem~\ref{thm: Liu,Xu; Ellingham, Nam, Voss} and concluded the next theorems.
\begin{thm}{\rm (\cite{MR3336100})} \label{thm:k-edge}
{Every $k$-edge-connected graph $G$ has a spanning tree $T$ such that for each vertex~$v$, $d_T(v)\le \lceil \frac{d_G(v)-2}{k}\rceil+2 $.
}\end{thm}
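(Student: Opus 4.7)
The plan is to deduce Theorem~\ref{thm:k-edge} directly from the main theorem announced in the abstract, whose parameter $\lambda\in[0,1]$ and real-valued function $\eta$ are well-suited to squeeze the ``$-2$'' into the ceiling. The key observation is that with $\eta(v)=d_G(v)/k+2$ and $\lambda=2/k$ one has
$$\lceil \eta(v)-\lambda\rceil = \Big\lceil\frac{d_G(v)-2}{k}\Big\rceil+2,$$
which matches the target bound exactly.

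First I would dispose of the trivial case $k=1$, where the claim merely asserts $d_T(v)\le d_G(v)$ and is satisfied by any spanning tree. Assuming $k\ge 2$, so that $\lambda=2/k\in(0,1]$, I would set $X=V(G)$, $\eta(v)=d_G(v)/k+2>\lambda$, and take $F$ to be the trivial spanning forest (so $d_F(v)=0$ and $\max\{0,d_F(v)-1\}=0$).

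The main step is to verify the hypothesis of the main theorem, which simplifies to
$$\omega(G\setminus S)\le \frac{1}{k}\Bigl(\sum_{v\in S}d_G(v)-2e_G(S)\Bigr)+2-\frac{2}{k}\qquad(S\subseteq V(G)).$$
If $S=\emptyset$, the right-hand side equals $2-2/k\ge 1=\omega(G)$, using connectivity of $G$ together with $k\ge 2$. If $S\ne\emptyset$, then by $k$-edge-connectivity every component $C$ of $G\setminus S$ satisfies $d_G(C)\ge k$, so summing over the components of $G\setminus S$ gives
$$k\cdot\omega(G\setminus S)\le \sum_{C}d_G(C)=\sum_{v\in S}d_G(v)-2e_G(S),$$
and the required inequality holds with slack $2-2/k\ge 0$.

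The main theorem then delivers a spanning tree $T$ with $d_T(v)\le \lceil\eta(v)-\lambda\rceil+\max\{0,d_F(v)-1\}=\lceil(d_G(v)-2)/k\rceil+2$ for every $v$, as claimed. I do not anticipate any serious obstacle: the whole argument is a parameter-matching exercise, the only subtlety being the choice $\lambda=2/k$ that absorbs the surplus in the $k$-edge-connectivity inequality and explains the sharpening from $\lceil d_G(v)/k\rceil$ in Theorem~\ref{thm: Liu,Xu; Ellingham, Nam, Voss} to $\lceil(d_G(v)-2)/k\rceil$ here.
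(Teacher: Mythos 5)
Your proof is correct and matches the paper's route. The paper proves this as a special case of Theorem~\ref{thm:GS:main}, which is obtained from Theorem~\ref{thm:Omega(GS)} (the strict-inequality form of Theorem~\ref{intro:thm:improvement}) with the same parameters $\eta(v)=d_G(v)/k+2$ and $\lambda=2/k$, and with the $k$-edge-connectivity estimate you derive directly appearing there as Lemma~\ref{lem:Omega(GS)}; the only extra content in the paper's version is a separate tweak of $\eta(u)$ at one chosen vertex $u$, which you rightly don't need for the stated bound.
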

\begin{thm}{\rm (\cite{MR3336100})}\label{thm:k-tree}
{Every $k$-tree-connected graph $G$ has a spanning tree $T$ such that for each vertex $v$, $d_T(v)\le \lceil \frac{d_G(v)-1}{k}\rceil+1$.
}\end{thm}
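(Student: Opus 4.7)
The plan is to derive the theorem from a \emph{balanced} packing of $k$ edge-disjoint spanning trees of $G$. Since $G$ is $k$-tree-connected, the characterization used in this paper (equivalently, the Nash--Williams/Tutte criterion $\Omega_m(G)=1$) supplies $k$ edge-disjoint spanning trees $T_1,\dots,T_k$ of $G$. The stated degree bound will follow once we can arrange that the multiset $\{d_{T_1}(v),\dots,d_{T_k}(v)\}$ is \emph{almost uniform} at every vertex, that is, $|d_{T_i}(v)-d_{T_j}(v)|\le 1$ for all vertices $v$ and all indices $i,j$.

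To produce such a balanced packing I would, among all packings of $k$ edge-disjoint spanning trees of $G$, choose one minimizing the convex potential
$$\Phi(T_1,\dots,T_k)=\sum_{v\in V(G)}\sum_{i=1}^{k}d_{T_i}(v)^{2}.$$
Suppose, for contradiction, that there exist $v$ and $i\ne j$ with $d_{T_i}(v)\ge d_{T_j}(v)+2$. Pick any edge $e\in E(T_i)$ incident to $v$; the unique cycle $C_{e,j}$ of $T_j+e$ passes through $v$ via $e$. If one can find an edge $f\in E(T_j)\cap C_{e,j}$ that is \emph{not} incident to $v$, then the simultaneous replacement $T_i\mapsto T_i-e+f$ and $T_j\mapsto T_j+e-f$ preserves the fact that each $T_r$ is a spanning tree (the standard matroidal exchange), keeps the packing edge-disjoint, decreases $d_{T_i}(v)$ by one, and increases $d_{T_j}(v)$ by one; a direct computation shows $\Phi$ then strictly decreases, contradicting minimality.

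Granted a balanced packing, set $m(v):=\min_{j}d_{T_j}(v)$. Balance forces $d_{T_j}(v)\le m(v)+1$ for every $j$, so
$$k\cdot m(v)\le \sum_{j=1}^{k}d_{T_j}(v)\le d_G(v),$$
whence $m(v)\le \lfloor d_G(v)/k\rfloor$. Taking $T$ to be any $T_j$ in the packing gives
$$d_T(v)\le m(v)+1\le \lfloor d_G(v)/k\rfloor+1\le \lceil (d_G(v)-1)/k\rceil+1,$$
the last step being an elementary integer comparison. The step I expect to be the main obstacle is the boundary case of the exchange argument: when every edge of $T_i$ at $v$ is parallel in $G$ to an edge of $T_j$ at $v$, the cycle $C_{e,j}$ collapses to length two and no non-$v$-incident $f$ exists. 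Because the paper allows multi-edges, this situation must be ruled out by varying either the tree $T_j$ used for the exchange or the candidate edge $e\in E(T_i)$ at $v$, and making this global choice work simultaneously at every vertex is the delicate technical point.
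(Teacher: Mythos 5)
Your route via a ``balanced'' packing of $k$ edge-disjoint spanning trees is entirely different from the paper's, which derives this theorem from the total-excess machinery (Theorem~\ref{thm:Preliminary} together with the counting identity of Lemma~\ref{lem:spanningforest} and the bound on $\omega(G\setminus S)$ from Lemma~\ref{lem:Omega(GS)}). Your closing arithmetic is fine for $k\ge 2$ (and $k=1$ is trivial), and if the balancing lemma held, your argument would in fact give the slightly sharper bound $d_T(v)\le\lceil d_G(v)/k\rceil$. However, the balancing step has gaps that a straightforward fix does not close.

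First, choosing $f\in E(T_j)\cap C_{e,j}$ does \emph{not} by itself guarantee that $T_i-e+f$ is a spanning tree: for that you also need $f$ to lie in the fundamental cut of $e$ with respect to $T_i$. The symmetric basis-exchange axiom does produce an $f$ lying simultaneously in $C_{e,j}$ and in that cut, but, as you yourself note, nothing forces that particular $f$ to avoid $v$; the unique edge of $C_{e,j}\setminus\{e\}$ that is incident to $v$ may be the only element of the intersection, and this can occur even without parallel edges. Second, and more decisively, the claim that the swap strictly decreases $\Phi$ is wrong. Writing $e=vw$ and $f=xy$ with $v,w,x,y$ distinct, the change is
\[
\Delta\Phi = 2\bigl(d_{T_j}(v)-d_{T_i}(v)+1\bigr)+2\bigl(d_{T_j}(w)-d_{T_i}(w)+1\bigr)+2\bigl(d_{T_i}(x)-d_{T_j}(x)+1\bigr)+2\bigl(d_{T_i}(y)-d_{T_j}(y)+1\bigr),
\]
and only the first summand is forced to be $\le -2$; if $T_i$ and $T_j$ have equal degrees at $w$, $x$, and $y$, then $\Delta\Phi\ge -2+2+2+2=4>0$. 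Your direct computation accounts only for the contribution at $v$. So minimality of $\Phi$ yields no contradiction, and this potential-function local-search argument cannot be repaired merely by varying the choice of $e$ or $f$; it would need a genuinely more careful multi-step or cut-aware exchange scheme.
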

%
%
%

In this paper, we improve Theorems~\ref{thm:Ellingham,Nam,Voss} to the following  stronger version
which can conclude Theorems~\ref{thm:Frank,Gyarfas},~\ref{thm:Ellingham,Nam,Voss},~\ref{thm:k-edge}, and~\ref{thm:k-tree} (not necessarily directly). In particular, it can conclude that every $1$-tough graph must have a spanning $3$-tree containing an arbitrary given (not necessarily perfect) matching.
\begin{thm}\label{intro:thm:improvement}
{Let $G$ be a graph with a spanning forest $F$. Let $f$ be a positive integer-valued function on $V (G)$. If for all $S\subseteq V(G)$, 
$\omega(G\setminus S)< \sum_{v\in S}\big(f(v)-2\big)+2+\omega(G[S])$,
then  $G$ has a spanning tree $T$ containing $F$ such that for each vertex $v$,
$$d_T(v)\le 
 \begin{cases}
f(v)+d_F(v),	&\text{if $d_F(v) = 0$};\\ 
f(v)+d_F(v)-1,	&\text{if $d_F(v)\ge 1$}.
\end {cases}$$
}\end{thm}

Ellingham and Zha (2000) established a sufficient toughness condition for extending a spanning forest with non-trivial components
to a spanning tree only by inserting a matching. 
Later, Ellingham, Nam, and Voss (2002) developed their result to the following theorem.
\begin{thm}{\rm (\cite{MR1871346})}\label{thm:tough-enough:k=c}
{Let $G$ be a connected graph with a spanning forest $F$ 
of which every component contains at least $c$ vertices with $c \ge 2$.
Let $h$ be a nonnegative integer-valued function on $V(G)$.
If for every $S\subseteq V(G)$, at least one of the following conditions holds:
\begin{enumerate}{
\item [$\bullet$] 
$\omega (G\setminus S) < \sum_{v\in S}\big(\frac{1}{2}h(v)-\frac{1}{c}\big)+2$.
\item [$\bullet$] 
$\omega (G\setminus S) < \sum_{v\in S}\frac{\rho-2}{2\rho-2}h(v)+2+\frac{1}{\rho-1}$  and $ \min\{h(v):v\in S\}> 0$, where $\rho=c\min\{h(v):v\in S\}$.
}\end{enumerate}
then $G$ has a spanning tree $T$ containing $F$  such that for each vertex $v$, $d_T(v) \le h(v)+d_F(v)$.
}\end{thm}

In this paper, we provide  a common improvement for both items of Theorem~\ref{thm:tough-enough:k=c}
 as the following stronger version. 
More generally, 
we will introduce a combined version 
for this result and 
Theorem~\ref{intro:thm:improvement}.
Owing to its complicated form, we postpone it until Section~\ref{sec:toughness}.
\begin{thm}\label{Intro:thm:impliesimproves}
{Let $G$ be a graph with a spanning forest $F$ of which every component contains at least $c$ vertices with $c \ge 2$.
Let $h$ be a nonnegative integer-valued function on $V(G)$.
If for all $S\subseteq V(G)$,
$$ \omega (G\setminus S)
 <
 \sum_{v\in S}\big(\frac{c}{2c-2}h(v)-\frac{1}{c-1}\big)+2+\frac{1}{c-1}\omega(G[S]),$$
then $G$ has a spanning tree $T$ containing $F$  such that for each vertex $v$, $d_T(v) \le h(v)+d_F(v)$.
}\end{thm}

In 1990 Jackson and Wormald~\cite{MR1126990}
conjectured that every $\frac{1}{n-1}$-tough 
graph with $n\ge 2$ admits a spanning closed $n$-walk.
They also observed that this conjecture is true for $\frac{1}{n-2}$-tough graphs, when $n\ge 3$.
 Later, Ellingham and Zha (2000) proved the remaining case $n=2$
 for $4$-tough graphs by making the following result.
In Section~\ref{sec:walks}, we solve this conjecture completely
 as mentioned in the abstract.
\begin{thm}\label{intro:thm:walk:tough}{\rm (\cite{MR1740929})}
{Every $4$-tough simple graph of order at least three admits a connected $\{2,3\}$-factor containing an arbitrary given $2$-factor.
}\end{thm}
%
%
%
%
%
%
%
%
%
%
%
%
%
%
%
%
%
%
%
%
\section{Preliminary result}
Here, we state the following fundamental theorem which was studied in~\cite[Theorem 1]{MR1871346}
 for the case that $M$ is the trivial matching. 
\begin{thm}\label{thm:Preliminary}
{Let $G$ be a graph with a spanning forest $F$, and 
let $M$ be a matching of $G$ whose non-trivial components are vertex-disjoint from non-trivial components of $F$.
Let $h$ be a nonnegative integer-valued function on $V(G)$.
 If $\mathcal{T}$ is a spanning $(h+d_F)$-forest of $G$ containing $F\cup M$ with the minimum $\omega(\mathcal{T})$, then there exists a subset $S$ of $V(G)$ with the following properties:
\begin{enumerate}{
\item $\omega(G\setminus [S,F])=\omega(\mathcal{T}\setminus [S,F])$.\label{condition:final:1}
\item For each vertex $v$ of $S$, $d_\mathcal{T}(v)= h(v)+d_F(v)$.\label{condition:final:2}
}\end{enumerate}
}\end{thm}
\begin{proof}
{Define $V_0=\emptyset $.
For any $S\subseteq V(G)$ and $u\in V(G)\setminus S$, 
let $\mathcal{A}(S,u)$ be the set of all spanning $(h+d_F)$-forests 
$\mathcal{T}^\prime $ of $G$ containing $F\cup M$ such that  
$\omega(\mathcal{T}')=\omega(\mathcal{T})$, and also
$\mathcal{T}^\prime$ and $\mathcal{T}$ have the same edges, except for some of the edges of $G$ whose ends are in $V(C)\setminus S$, where $C$ is the component of $\mathcal{T}\setminus [S,F]$ containing $u$.
Now, for each positive integer $n$, recursively define $V_n$ as follows:
$$V_n=V_{n-1}\cup\{\, v\in V(G)\setminus V_{n-1}\colon \, d_{\mathcal{T}^\prime }(v)= h(v)+d_F(v) \text{\, for all\, }\mathcal{T}^\prime \in \mathcal{A}(V_{n-1},v)\,\}.$$
 Now, we prove the following claim.
%
\vspace{2mm}\\
{\bf Claim.} Let $x$ and $y$ be two vertices in different components of $\mathcal{T}\setminus [V_{n-1},F]$. 
If $xy\in E(G)\setminus E(\mathcal{T})$, then $x\in V_{n}$ or $y\in V_{n}$.
\vspace{2mm}\\
{\bf Proof of Claim.} 
By induction on $n$.
Suppose, to the contrary,  that $x$ and $y$ are in different components of 
$\mathcal{T}\setminus [V_{n-1},F]$, $xy\in E(G)\setminus E(\mathcal{T})$, and $x,y\not \in V_{n}$. 
Let $X$ and $Y$ be the vertex sets of the components of $\mathcal{T}\setminus [V_{n-1},F]$
containing  $x$ and $y$, respectively.
Since $x,y\not\in V_{n}$,
 there exist
 $\mathcal{T}_x\in \mathcal{A}(V_{n-1},x)$ and
 $\mathcal{T}_y\in \mathcal{A}(V_{n-1},y)$ with $d_{\mathcal{T}_x}(x)< h(x)+d_F(x)$ and $d_{\mathcal{T}_y}(y)< h(y)+d_F(y)$. 
For $n=1$, define $\mathcal{T}'$ to be the spanning forest of $G$ containing $F\cup M$ with
$$E(\mathcal{T}')=E(\mathcal{T})+xy-E(\mathcal{T}[X])+E(\mathcal{T}_x[X])-E(\mathcal{T}[Y])+E(\mathcal{T}_y[Y]).$$
Since $\mathcal{T}'$ is a spanning $(h+d_F)$-forest and $\omega(\mathcal{T'})<\omega(\mathcal{T})$, we arrive at a contradiction.
Now, suppose $n\ge 2$.
By the induction hypothesis, $x$ and $y$ are in the same component of $\mathcal{T}\setminus [V_{n-2},F]$. 
 Let $P$ be the unique path connecting $x$ and $y$ in $\mathcal{T}$. 
 Notice that the vertices of $P$ lie in the same component of $\mathcal{T}\setminus [V_{n-2},F]$.
Pick $e\in E(P)\setminus E(F)$ such that $e$ is incident to a vertex $z\in V_{n-1}\setminus V_{n-2}$.
According to the assumption on $M$ and $F$,
 if $e \in E(M)$ then for the other edge $e'\in E(P)\setminus \{e\}$ incident to $z$, we must have $e'\not\in E(F)\cup E(M)$.
We may therefore assume that $e \not\in E(M)$.
Now, let $\mathcal{T}'$ be the spanning forest of $G$ containing $F\cup M$ with
 $$E(\mathcal{T}')=E(\mathcal{T})-e+xy
-E(\mathcal{T}[X])+E(\mathcal{T}_x[X])
-E(\mathcal{T}[Y])+E(\mathcal{T}_y[Y]).$$
It is not hard to check that $d_{\mathcal{T}'}(z)<d_{\mathcal{T}}(z)\le h(z)+d_F(z)$ and $\mathcal{T}'$ lies in $\mathcal{A}(V_{n-2},z)$.
Since $z\in V_{n-1}$, we arrive at a contradiction.
Hence the claim holds.
%
%

Obviously, there exists a positive integer $n$ with $V_0\subseteq \cdots\subseteq V_{n-1}=V_{n}$.
 Put $S=V_{n}$.
For each $v\in V_i\setminus V_{i-1}$, we have $\mathcal{T}\in \mathcal{A}(V_{i-1},v)$ and so $d_\mathcal{T}(v)= h(v)+d_F(v)$. 
This implies  Condition~\ref{condition:final:2}.
In addition, by the previous claim,  every edge of $E(G)\setminus E(\mathcal{T})$ joining different components of 
$\mathcal{T}\setminus [S,F]$ must be incident to $S$. This establishes Condition~\ref{condition:final:1} and completes the proof.
}\end{proof}
When $F$ is the trivial spanning forest, Theorem~\ref{thm:Preliminary} can be reformulated to the following simpler version.
For our purposes in Section~\ref{section:max(0,dF-1)}, this special case would be sufficient. 
\begin{cor}\label{cor:Preliminary}
{Let $G$ be a graph with a matching $M$ and let $f$ be a positive integer-valued function on $V(G)$. 
 If $\mathcal{T}$ is a spanning $f$-forest of $G$ containing $M$ with the minimum $\omega(\mathcal{T})$, then there exists a subset $S$ of $V(G)$ with the following properties:
\begin{enumerate}{
\item $\omega(G\setminus S)=\omega(\mathcal{T}\setminus S)$.\label{cor:condition:final:1}
\item For each vertex $v$ of $S$, $d_\mathcal{T}(v)= f(v)$.\label{cor:condition:final:2}
}\end{enumerate}
}\end{cor}
%
%
%
%
%
%
%
%
%
%
%
%
%

\section{Connected $(d_F, f+d_F-1)$-factors}
The following lemma establishes a simple but important property of forests.
\begin{lem}\label{lem:spanningforest}
{Let $\mathcal{T}$ be a forest with a spanning forest $F$. If $S\subseteq V(\mathcal{T})$ and $\mathcal{F}=\mathcal{T}\setminus E(F)$, then 
$$\sum_{v\in S}d_{\mathcal{F}}(v)=\omega(\mathcal{T}\setminus [S,F])-\omega(\mathcal{T})+e_\mathcal{F}(S).$$
}\end{lem}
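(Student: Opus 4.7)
The plan is to count, in two ways, the set $A$ of edges of $\mathcal{F}$ that are incident to at least one vertex of $S$, and then use the fact that $\mathcal{T}\setminus[S,F]$ is obtained from $\mathcal{T}$ by deleting exactly the edges in $A$.

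First, I would compute $|A|$ via a double-counting argument. In the sum $\sum_{v\in S}d_{\mathcal{F}}(v)$, each edge of $\mathcal{F}$ with exactly one end in $S$ is counted once, and each edge of $\mathcal{F}$ with both ends in $S$ is counted twice. The latter is exactly $e_{\mathcal{F}}(S)$, so
$$|A|=\sum_{v\in S}d_{\mathcal{F}}(v)-e_{\mathcal{F}}(S).$$

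Second, I would observe that by the definition of $\mathcal{T}\setminus[S,F]$, one removes from $\mathcal{T}$ all edges incident to $S$ except those in $F$; since $E(\mathcal{F})=E(\mathcal{T})\setminus E(F)$, the deleted edges are exactly those of $\mathcal{F}$ incident to $S$, i.e.\ the set $A$. Because $\mathcal{T}$ is a forest, every edge of $\mathcal{T}$ is a bridge, so deleting a single edge raises $\omega$ by exactly one; deleting $|A|$ edges successively therefore gives
$$\omega(\mathcal{T}\setminus[S,F])=\omega(\mathcal{T})+|A|.$$
Combining this with the previous identity and rearranging yields the desired equality. There is no real obstacle here — the only subtlety is keeping straight that $\mathcal{F}$ sits on the same vertex set as $\mathcal{T}$ and that $[S,F]$-deletion removes edges rather than vertices, so the usual forest/bridge argument applies cleanly without needing any induction or case analysis.
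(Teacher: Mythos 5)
Your proof is correct. You and the paper rely on the same basic fact (every edge of a forest is a bridge, so deleting edges raises the component count one at a time), but you organize the argument differently: you do a single direct double count of the edge set $A$ of $\mathcal{F}$-edges meeting $S$, obtaining $|A|=\sum_{v\in S}d_{\mathcal{F}}(v)-e_{\mathcal{F}}(S)$ on one hand and $|A|=\omega(\mathcal{T}\setminus[S,F])-\omega(\mathcal{T})$ on the other, whereas the paper proceeds by induction on the number of such edges, deleting them one at a time and tracking how each of the four quantities changes. The two are mathematically the same bookkeeping, but your version is shorter and arguably more transparent for this lemma. The paper's inductive framing has a small structural advantage: the authors later prove a direct generalization (Lemma~\ref{lem:spanningforest:gen}, where $\mathcal{T}$ is an $m$-critical graph and $\omega$ is replaced by $m\Omega_m$), and there the edges being removed are no longer bridges in the ordinary sense, so the one-edge-at-a-time accounting via $\Omega_m(H)-\Omega_m(H\setminus e)=1/m$ is the natural invariant to track; the inductive template carries over verbatim, while your bridge argument would need to be rephrased in terms of $\Omega_m$ rather than $\omega$.
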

\begin{proof}
{By induction on the number of edges of $\mathcal{F}$ which are incident to the vertices in $S$. If there is no edge of  $\mathcal{F}$ incident to a vertex in $S$, then the proof is clear. Now, suppose that there exists an edge $e=uu'\in E(\mathcal{F})$ with $|S\cap \{u,u'\}|\ge 1$. Hence
\begin{enumerate}{
\item $\omega(\mathcal{T})=\omega(\mathcal{T}\setminus e)-1,$
\item $\omega(\mathcal{T}\setminus [S,F])= \omega((\mathcal{T}\setminus e)\setminus [S,F]),$

\item $e_\mathcal{F}(S)=e_{\mathcal{F}\setminus e}(S)+|S\cap \{u,u'\}|-1,$
\item $\sum_{v\in S}d_{\mathcal{F}}(v)=\sum_{v\in S}d_{\mathcal{F}\setminus e}(v)\,+|S\cap \{u,u'\}|.$
}\end{enumerate}
Therefore, by the induction hypothesis on $\mathcal{T}\setminus e$ with the spanning forest $F$ the lemma holds.
}\end{proof}
%
%
%
%
%
The following theorem is essential in this section.
\begin{thm}\label{thm:Omega(G[S,F])}
Let $G$ be a connected graph with $X\subseteq V(G)$ and with a factor $F$.
Let $\lambda\in [0,1]$ be a real number and let $\eta:X\rightarrow (\lambda,\infty)$ be a real function.
If for all $S\subseteq X$, 
$$\omega(G\setminus [S,F])<\sum_{v\in S}\big(\eta(v)-2\big)+2-\lambda e_G(S,F)+1-\lambda,$$
then $G$ has a connected factor $H$ containing $F$ such that for each $v\in X$,
 $d_H(v)\le \lceil\eta(v)-\lambda\rceil+d_F(v)-1.$
\end{thm}
\begin{proof}
{For each vertex $v$, define 
$$h(v)= 
 \begin{cases}
d_G(v)+1,	&\text{if  $v\not\in X$};\\
\lceil\eta(v)-\lambda\rceil-1,	&\text{if $v\in X$}. 
\end {cases}$$
First, suppose that $F$ is a forest. Let $\mathcal{T}$ be a spanning $(h+d_F)$-forest of $G$ containing $F$ with the minimum $\omega(\mathcal{T})$. 
Define $S$ to be a subset of $V(G)$ with the properties described in Theorem~\ref{thm:Preliminary}. 
If $S$ is empty, then $\omega(\mathcal{T})=\omega(G)=1$ and the theorem clearly holds. So, suppose $S$ is nonempty.
If $v\in V(G)\setminus X$, then $d_{\mathcal{T}}(v)\le d_G(v)<h(v)+d_F(v)$. This implies that $S\subseteq X$.
Put $\mathcal{F}=\mathcal{T}\setminus E(F)$. By
Lemma~\ref{lem:spanningforest} and Theorem~\ref{thm:Preliminary},
 $$\sum_{v\in S}h(v)
= \sum_{v\in S}d_{\mathcal{F}}(v)=\omega(\mathcal{T}\setminus [S,F])-\omega(\mathcal{T})+e_\mathcal{F}(S),$$
and so
$$
\omega(\mathcal{T}) = \omega(G\setminus [S,F])-\sum_{v\in S}h(v)+(1-\lambda) e_\mathcal{F}(S)+\lambda e_\mathcal{F}(S).
$$
Since $e_\mathcal{F}(S)\le |S|-1$
and $e_\mathcal{F}(S)\le e_G(S,F)$, by the assumption, we therefore have 
$$ 
\omega(\mathcal{T}) \le\omega(G\setminus [S,F])-\sum_{v\in S}(\eta(v)-\lambda-1)+(1-\lambda)(|S|-1) +\lambda e_G(S,F)< 2.$$
Hence  $\omega(\mathcal{T}) = 1$ and the theorem holds.
Now, suppose that $F$ is not a forest.
Remove some of the edges of the components of $F$ until the resulting graph $F'$ becomes a forest such that their components have the  same vertices.
It is enough, now, to apply the theorem on $F'$ and finally add the edges of $E(F)\setminus E(F')$ 
to that explored tree. 
}\end{proof}
The following corollary provides a necessary and sufficient condition for the existence of a spanning tree with the described properties.
\begin{cor}\label{thm:A necessary and sufficient condition-(X,F) = 0}
Let $G$ be a graph with a spanning forest $F$ and let $X\subseteq V(G)$ with $e_G(X,F) = 0$.
Then $G$ has a spanning tree $T$ containing $F$ such that for each $v\in X$, $d_T(v)\le h(v)+d_F(v)$, if and only if 
for all $S\subseteq X$, $\omega(G\setminus [S,F])\le \sum_{v\in S}h(v)\,+1$,
where $h$ is a nonnegative integer-valued function on $X$.
\end{cor}
\begin{proof}
{Assume that $G$ has a spanning tree $T$ containing $F$ such that for each $v\in X$, $d_T(v)\le h(v)+d_F(v)$. 
 Put $\mathcal{F}=T\setminus E(F)$ and 
let $S\subseteq X$. 
According to the assumption on $X$, one can conclude that $e_\mathcal{F}(S)=0$. Since for each $v\in S$, $d_{\mathcal{F}}(v)\le h(v)$, and $\omega(T)=1$, with respect to Lemma~\ref{lem:spanningforest},
$\omega(G\setminus [S,F])\le \omega(T\setminus [S,F])= \sum_{v\in S}d_{\mathcal{F}}(v)\,+1\le \sum_{v\in S}h(v)\,+1$.
To prove the converse, one can apply Theorem~\ref{thm:Omega(G[S,F])} with $\lambda=1$ and $\eta(v)=h(v)+2$. Note that $G$ is connected, because  $\omega(G\setminus [\emptyset,F])\le 1$.
}\end{proof}
This corollary shows an application of Corollary~\ref{thm:A necessary and sufficient condition-(X,F) = 0}. 
It can also be deduced from  Corollary~\ref{cor:Omega(GS)}.
\begin{cor}{\rm (\cite{MR519276}, see Page 5 in~\cite{MR2746831})}
Let $G$ be a graph with an independent set $X\subseteq V(G)$.
Then $G$ has a spanning tree $T$ such that for each $v\in X$, $d_T(v)\le f(v)$, if and only if\, 
 $\omega(G\setminus S)\le \sum_{v\in S}(f(v)-1)\,+1$ for all $S\subseteq X$, where $f$ is a positive integer-valued function on $X$. 
\end{cor}
\begin{proof}
{Apply Corollary~\ref{thm:A necessary and sufficient condition-(X,F) = 0} and  the fact that $\omega(G\setminus [S,F])=\omega(G\setminus S)+|S|$ when $F$ is the trivial spanning forest.
}\end{proof}
\begin{cor}
Let $G$ be a connected graph and let $f$ be a positive integer-valued function on $V(G)$.
If for all $S\subseteq V(G)$, 
$$\omega(G\setminus [S,F])\le \sum_{v\in S}\big(f(v)-2\big)+2,$$
then $G$ has a connected factor $H$ containing $F$ such that for each vertex $v$,
 $d_H(v)\le f(v)+d_F(v)-1.$
\end{cor}
\begin{proof}
{Apply Theorem~\ref{thm:Omega(G[S,F])} with $\eta=f$ and $\lambda=0$.
}\end{proof}
%
%
%
%
\subsection{Graphs with high essential edge-connectivity}
The following lemma provides two upper bounds on $\omega(G\setminus [S,F])$ depending on 
 two parameters of connectivity of $G/F$ and $d_G(v,F)$ of the vertices $v$ in $S$.
\begin{lem}\label{lem:Omega(G[S,F])}
{Let $G$ be a graph with a factor $F$ and let $S\subseteq V(G)$. Then
$$\omega(G\setminus [S,F])\le
 \begin{cases}
 \sum_{v\in S}(\frac{d_{G}(v,F)}{k}+1)\,-\frac{2}{k}e_G(S,F),	&\text{if $G/F$ is $k$-edge-connected and 
$S\neq \emptyset$};\\ 
 \sum_{v\in S}\frac{d_{G}(v,F)}{k}\;+1-\frac{1}{k}e_G(S,F),	&\text{if $G/F$ is $k$-tree-connected}.
\end {cases}$$
}\end{lem}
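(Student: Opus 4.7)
The plan is to transfer the problem to the contracted graph $G^\ast := G/F$. Since every component of $F$ remains connected in $G\setminus[S,F]$, we have $\omega(G\setminus[S,F])=\omega(G^\ast\setminus E^\ast)$, where $E^\ast\subseteq E(G^\ast)$ is the image under contraction of those $G$-edges incident to $S$ whose two ends lie in different components of $F$. A direct double count yields $|E^\ast|=\sum_{v\in S}d_G(v,F)-e_G(S,F)$, since in $\sum_{v\in S}d_G(v,F)$ each edge with exactly one end in $S$ is counted once and each edge with both ends in $S$ twice, while $e_G(S,F)$ is the number of the latter.

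For Case~2, the plan is to invoke the classical partition property of $k$-tree-connected graphs: for every partition of $V(G^\ast)$ into $p$ parts, at least $k(p-1)$ edges of $G^\ast$ cross between parts. Applied to the partition by the components of $G^\ast\setminus E^\ast$, the cross-edges lie in $E^\ast$, so $k(p-1)\le|E^\ast|$, giving $p\le|E^\ast|/k+1$ and hence the stated bound.

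For Case~1, set $S^\ast:=\{F_i:V(F_i)\cap S\neq\emptyset\}$ and split the $p$ components of $G^\ast\setminus E^\ast$ into \emph{dirty} components (those meeting $S^\ast$) and \emph{clean} components (the rest), with counts $c_1$ and $c_0$. Since each super-vertex in $S^\ast$ sits in a unique component, $c_1\le|S^\ast|\le|S|$. We may assume $p\ge 2$ (otherwise $p=1\le|S|$ is immediate since $S\neq\emptyset$), so $k$-edge-connectedness of $G^\ast$ gives $|\partial_{G^\ast}C|\ge k$ for every clean component $C$. The critical observation is that any boundary edge $e$ of a clean $C$ must source its $S$-vertex on the side opposite $C$: the $C$-endpoint lies in a super-vertex outside $S^\ast$, so it carries no vertex of $S$, whereas membership of $e$ in $E^\ast$ forces some end to be in $S$. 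Hence each such $e$ corresponds to a $G$-edge joining distinct $F$-components with \emph{exactly} one end in $S$, and the total number of such $G$-edges is $\sum_{v\in S}d_G(v,F)-2e_G(S,F)$. Moreover, an edge can be a boundary edge of at most one clean component, since otherwise both its endpoints would lie in super-vertices outside $S^\ast$, contradicting $E^\ast$-membership. Summing the cut inequalities over the $c_0$ clean components therefore yields $kc_0\le\sum_{v\in S}d_G(v,F)-2e_G(S,F)$, and combining with $c_1\le|S|$ delivers the Case~1 bound.

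The main obstacle is precisely this final counting step in Case~1: one must rule out "both ends in $S$" edges from clean cuts, which is exactly what the absence of $S$-vertices in non-$S^\ast$ super-vertices accomplishes. By contrast, Case~2 is a one-line application of the partition property, with no need to track where the $S$-vertices sit among the super-vertices.
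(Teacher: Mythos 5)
Your proof is correct and follows essentially the same route as the paper: in Case~1 you bound the boundary of each $S$-free (``clean'') component of $G/F$ after deleting the $S$-incident cross-edges and observe those boundary edges have exactly one end in $S$, while in Case~2 you apply the Nash--Williams/Tutte partition inequality to the component partition. The paper performs the identical double-count but without explicitly introducing $G^\ast$ and $E^\ast$; your explicit contraction simply makes the same accounting more transparent.
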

\begin{proof}
{First, assume that $G/F$ is $k$-edge-connected and $S$ is nonempty. 
Thus there are at least 
 $k{\bigr(}\omega(G\setminus [S, F])-|S|{\bigr)}$ 
edges of $G$ with exactly one end in $S$ joining different components of $G\setminus [S, F]$, 
because $S$ is nonempty and there are at least $\omega(G\setminus [S,F])-|S|$ 
components of $G\setminus [S,F]$ without any vertex of $S$.
 Note that we might have $\omega(G\setminus [S,F]) < |S|$.
On the other hand, there are $ \sum_{v\in S}d_{G}(v,F)\,-2e_G(S,F)$ edges of $G$ with exactly one end in $S$  joining different components of $F$. 
Hence we have
$$k{\bigr(}\omega(G\setminus [S, F])-|S|{\bigr)} \le \sum_{v\in S}d_{G}(v,F)-2e_G(S,F).$$
Next, assume that $G/F$ is $k$-tree-connected. 
Thus there are at least $k(\omega(G\setminus [S,F])-1)$ edges of $G$ with at least one end in $S$ joining different components of $G\setminus [S, F]$.
On the other hand, there are $ \sum_{v\in S}d_{G}(v,F)\,-e_G(S,F)$ edges of $G$ with  at least one end in $S$ joining different components of $F$. 
Hence we have
$$ k(\omega(G\setminus [S,F])-1)\le\; \sum_{v\in S}d_{G}(v,F)\;-e_G(S,F).$$
These inequalities complete the proof.
}\end{proof}
%
The following theorem generalizes Theorems~\ref{thm:k-edge} and~\ref{thm:k-tree}.
\begin{thm}
{Let $G$ be a graph with a factor $F$. Then $G$ has a connected factor $H$ containing $F$ such that for each vertex $v$, 
$$d_H(v)\le
 \begin{cases}
\big \lceil \frac{d_G(v)-d_F(v)-2}{k}\big\rceil+d_F(v)+2 ,	&\text{if $G/F$ is $k$-edge-connected};\\ 
\big \lceil \frac{d_G(v)-d_F(v)-1}{k}\big\rceil+d_F(v)+1 ,	&\text{if $G/F$ is $k$-tree-connected}.
\end {cases}$$
Furthermore, for an arbitrary given vertex $u$, the upper bound can be reduced to $\lfloor (d_G(u)-d_F(u))/k\rfloor+d_F(u)$.
}\end{thm}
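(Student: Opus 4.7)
The plan is to derive both parts as direct applications of Theorem~\ref{thm:Omega(G[S,F])} with $X=V(G)$, choosing $\eta$ and $\lambda$ so that $\lceil\eta(v)-\lambda\rceil+d_F(v)-1$ matches the target bound, and then verifying its hypothesis through Lemma~\ref{lem:Omega(G[S,F])}. Throughout I will use the elementary bound $d_G(v,F)\le d_G(v)-d_F(v)$, which holds because every edge counted by $d_G(v,F)$ joins different components of $F$ and is therefore not an edge of $F$. The case $k=1$ is trivial, since both target upper bounds simplify to $d_G(v)$, so I assume $k\ge 2$ from now on.

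For the $k$-edge-connected case I would take $\lambda=2/k\in[0,1]$ and $\eta(v)=d_G(v,F)/k+3$. Theorem~\ref{thm:Omega(G[S,F])} then supplies
$$
d_T(v)\le\lceil\eta(v)-\lambda\rceil+d_F(v)-1=\lceil(d_G(v,F)-2)/k\rceil+d_F(v)+2\le\lceil(d_G(v)-d_F(v)-2)/k\rceil+d_F(v)+2,
$$
which is exactly the required bound. For the $k$-tree-connected case I would use $\lambda=1/k$ and $\eta(v)=d_G(v,F)/k+2$; the analogous computation produces $d_T(v)\le\lceil(d_G(v)-d_F(v)-1)/k\rceil+d_F(v)+1$.

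To verify the hypothesis of Theorem~\ref{thm:Omega(G[S,F])} I would plug the appropriate estimate from Lemma~\ref{lem:Omega(G[S,F])} into its right-hand side. For nonempty $S$ the per-vertex contributions cancel and a residual slack of $3-2/k$ (edge-connected) or $2-1/k$ (tree-connected) remains, both strictly positive for $k\ge 2$. For $S=\emptyset$ the hypothesis simplifies to $\omega(G)<3-\lambda$, which holds because $G$ is connected.

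For the \emph{furthermore} clause I would keep $\eta(v)$ as above for $v\ne u$ but replace $\eta(u)$ by $d_G(u,F)/k+\lambda+\varepsilon$, where $\varepsilon>0$ is small enough to guarantee $\lceil d_G(u,F)/k+\varepsilon\rceil=\lfloor d_G(u,F)/k\rfloor+1$ (any $\varepsilon\in(0,1]$ works when $d_G(u,F)/k$ is an integer, otherwise take $\varepsilon$ strictly less than the distance from $d_G(u,F)/k$ up to the next integer). Theorem~\ref{thm:Omega(G[S,F])} then gives
$$
d_T(u)\le\lceil\eta(u)-\lambda\rceil+d_F(u)-1=\lfloor d_G(u,F)/k\rfloor+d_F(u)\le\lfloor(d_G(u)-d_F(u))/k\rfloor+d_F(u).
$$
The hypothesis is affected only for subsets $S\ni u$; a direct computation shows that the slack there collapses to precisely $\varepsilon>0$, so strict inequality still holds. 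This residual bookkeeping is the only delicate step: the reduction of $\eta(u)$ from the generic value down to $d_G(u,F)/k+\lambda+\varepsilon$ consumes almost exactly the original slack, and the argument works only because an arbitrarily small positive $\varepsilon$ is retained.
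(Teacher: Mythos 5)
Your proof is correct and takes essentially the same route as the paper: both instantiate Theorem~\ref{thm:Omega(G[S,F])} with $\lambda\in\{2/k,1/k\}$ and verify the hypothesis via Lemma~\ref{lem:Omega(G[S,F])}. The only cosmetic differences are that you set $\eta(v)=d_G(v,F)/k+3$ (resp.\ $+2$) while the paper uses $\eta(v)=(d_G(v)-d_F(v))/k+3$ (which dominates yours via $d_G(v,F)\le d_G(v)-d_F(v)$), and for the distinguished vertex the paper picks the concrete value $\eta(u)=(d_G(u)-d_F(u)+3)/k$ and invokes the identity $\lfloor n/k\rfloor=\lceil(n+1)/k\rceil-1$, whereas you retain a free $\varepsilon>0$; this avoids the identity at the cost of one extra limiting argument, but both compute the same final bound.
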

\begin{proof}
{We may assume that $k\ge 2$, as the assertions trivially hold when $k=1$.
Since $G$ is connected, it is obvious that $ \omega (G\setminus [\emptyset,F])=1$. 
 Let $S$ be a nonempty subset of $V(G)$. 
If $G/F$ is $k$-edge-connected, then
by Lemma~\ref{lem:Omega(G[S,F])}, we have 
$$\omega(G\setminus [S,F])\le
 \sum_{v\in S}(\frac{d_G(v,F)}{k}+1)-\frac{2}{k}e_G(S,F)\le
 \begin{cases}
\sum_{v\in S}(\eta(v)-2)+2-\frac{2}{k}e_G(S,F)+1-\frac{3}{k},	&\text{if $u\in S$};\\ 
\sum_{v\in S}(\eta(v)-2)-\frac{2}{k}e_G(S,F),	&\text{if $u\not\in S$},
\end {cases}$$
where $\eta(u)= \frac{d_G(u)-d_F(u)+3}{k}$ and $\eta(v)= \frac{d_G(v)-d_F(v)}{k}+3$ for all $v\in V(G)\setminus u$.
If $G$ is $k$-tree-connected, then 
by Lemma~\ref{lem:Omega(G[S,F])}, we also have 
$$\omega(G\setminus [S,F])\le
\sum_{v\in S}\frac{d_G(v,F)}{k}\;+1-\frac{1}{k}e_G(S,F)\le
 \begin{cases}
\sum_{v\in S}(\eta(v)-2)+2-\frac{1}{k}e_G(S,F)+1-\frac{2}{k},	&\text{if $u\in S$};\\ 
\sum_{v\in S}(\eta(v)-2)+1-\frac{1}{k}e_G(S,F),	&\text{if $u\not\in S$},
\end {cases}$$
where $\eta(u)= \frac{d_G(u)-d_F(u)+2}{k}$ and $\eta(v)= \frac{d_G(v)-d_F(v)}{k}+2$ for all $v\in V(G)\setminus \{u\}$.
Hence the assertions follow from Theorem~\ref{thm:Omega(G[S,F])} with $\lambda\in \{2/k, 1/k\}$.
}\end{proof}
%
%
%
%
%
%
%
%
%
%
%
\section{Connected $(d_F, f+\max\{0,d_F-1\})$-factors}
\label{section:max(0,dF-1)}
Our aim in this section is to prove Theorem~\ref{intro:thm:improvement} and give several applications of it on connected factors.
We begin with the following lemma that allows us to make the proof simpler. This lemma can also develop a result due to Rivera-Campo~\cite{MR1451489}, who gave a sufficient condition 
for the existence of a  spanning tree with bounded maximum degree containing an arbitrary given matching. 
%
%
%
\begin{lem}\label{lem:maximal-matching}
{Let $G$ be a graph with a factor $F$.
If a maximal matching of $F$ can be extended to a spanning tree $T$, 
then $F$ itself can be extended to a connected factor $H$ such that for each vertex $v$,
 $$d_H(v)\le d_T(v)+\max\{0, d_F(v)-1\}.$$
}\end{lem}
\begin{proof}
{Choose a maximal matching $M$ of $F$ which can be extended to a spanning tree $T$.
Define $G_0=T\cup F$.
Let $T_0$ be a spanning tree of $G_0$ containing $M$ such that
$d_{T_0}(u)\le d_T(u)$  for all $u\in A=\{v\in V(G): d_M(v)=0\}$.
According to the maximality of $M$, the vertex set $A$ must be an independent set of $F$. 
Otherwise, we can insert a new edge of $F$ into $M$ to expand it to a large matching which is a contradiction.
Note  that $T$ is a natural candidate for $T_0$.
Consider $T_0$ with the maximum $| E(F)\cap E(T_0)|$. 
Define $H=T_0\cup  F$. We claim that $H$ is the desired factor we are looking for.
Let $v\in V(H)$.
If $d_{M}(v)= 1$ or $d_F(v)=0$, then 
$$d_{H}(v)\le d_{G_0}(v) \le d_{T}(v)+d_F(v)-d_{M}(v)= d_T(v)+\max\{0, d_F(v)-1\}.$$
So, suppose that $v\in A$ and $d_F(v)> 0$.
Define $F_0$ to be the factor of $F$ with $E(F_0)=E(F)\cap  E(T_0)$.
To complete the proof, we are going  to show that $d_{F_0}(v)>0$, which can imply that
$d_{H}(v) \le d_{T_0}(v)+d_F(v)-d_{F_0}(v)\le d_T(v)+d_F(v)-1$.
Suppose, to the contrary, that $d_{F_0}(v)= 0$.
Pick $vx\in E(F)$ so that $vx\notin E(T_0)$. 
Thus there exists an edge $vy\in E(T_0)$ such that the graph $T'_0=T_0-vy+vx$ is still connected (we might have $x=y$).
Since $d_{F_0}(v)= 0$, we must have $vy\not \in E(F)$ and so $vy\not\in E(M)$.
 Thus the spanning tree $T'_0$ must contain the edges of $M$.
\begin{figure}[h]
{\centering
\includegraphics[scale=0.75]{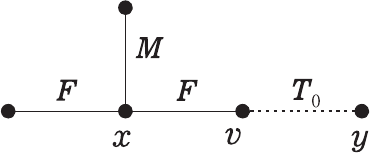}
\caption{An example for which  the vertex $x$ is the center of a star component of $F$.}
\label{Fig:12}
}\end{figure}
According to this construction,  
$d_{T'_0}(u)=d_{T_0}(u)$ for all $u\in V(G)\setminus \{x,y\}$. 
Moreover, $d_{T'_0}(x)=d_{T_0}(x)+1$ and $d_{T'_0}(y)=d_{T_0}(y)-1$ when $x\neq y$, and 
$d_{T'_0}(x)=d_{T_0}(x)$ when $x=y$.
Since $vx\in E(F)$ and $v\in A$, 
we must have $x\not \in A$. 
Therefore, $d_{T_0'}(u)\le d_{T_0}(u)\le d_{T}(u)$ for all  $u\in A$.
Since $| E(F)\cap E(T_0')|= | E(F)\cap E(T_0)|+1$, we derive a contradiction to the maximality of $T_0$, as desired.
}\end{proof}
The following theorem is essential in this section.
\begin{thm}\label{thm:c=2:G[S]}
{Let $G$ be a graph with $X\subseteq V(G)$ and with a factor $F$. 
Let $f$ be a positive integer-valued function on $X$. If for all $S\subseteq X$, 
$$\omega(G\setminus S)< \sum_{v\in S}\big(f(v)-2\big)+2+\omega(G[S]),$$ 
then  $G$ has a connected factor $H$ containing $F$ such that for each $v\in X$,
$d_H(v)\le f(v)+\max\{0, d_F(v)-1\}$.
}\end{thm}
\begin{proof}{For each $v\in V(G)\setminus X$, define $f(v)=d_G(v)+1$. Choose a matching $M$ of $F$ and 
let $\mathcal{T}$ be a spanning $f$-forest of $G$ containing $M$ with the minimum $\omega(\mathcal{T})$.
Define $S$ to be a subset of $V (G)$ with the properties described in Corollary~\ref{cor:Preliminary}.
If $v\in V(G)\setminus X$, then $d_{\mathcal{T}}(v)\le d_G(v)<f(v)$. This implies that $S\subseteq X$.
By Lemma~\ref{lem:spanningforest} and Corollary~\ref{cor:Preliminary},
$$\sum_{v\in S} f(v) = \sum_{v\in S} d_\mathcal{T}(v) = 
\omega(\mathcal{T}\setminus S)+|S| -\omega(\mathcal{T})+ e_{\mathcal{T}}(S),$$
and hence
$$
 \omega(\mathcal{T}) = \omega(G\setminus S)- \sum_{v\in S} (f(v)-1) + e_{\mathcal{T}}(S).
$$
Since 
$e_{\mathcal{T}}(S) \le |S|-\omega(G[S])$, by the assumption, we therefore have 
$$\omega(\mathcal{T}) \le \omega(G\setminus S)- \sum_{v\in S} (f(v)-2) -\omega(G[S])<2.$$
Thus $\mathcal{T}$ is a spanning $f$-tree of $G$ containing $M$. Since $M$ is an arbitrary matching, 
by Lemma~\ref{lem:maximal-matching}, one can conclude that the factor $F$ itself can be extended to a connected factor $H$ 
such that for each vertex $v$,
$d_H(v)\le f(v)+\max\{0, d_F(v)-1\}$.
 Hence the theorem is proved.
}\end{proof}
\begin{cor}\label{cor:Omega(GS)}
{Let $G$ be a graph with an independent set $X\subseteq V(G)$.
Let $f$ be a positive integer-valued function on $X$.
 If for all $S\subseteq X$, 
$$\omega(G\setminus S)\le \sum_{v\in S}(f(v)-1)+1,$$ 
then every factor $F$ can be extended to a connected factor $H$ such that for each $v\in X$,
$d_H(v)\le f(v)+\max\{0, d_F(v)-1\}.$
}\end{cor}
\begin{proof}
{Let $S$ be a subset of $X$. Since $X$ is an independent set, we must have $\omega(G[S])=|S|$ which implies that
$\omega(G\setminus S)\le \sum_{v\in S}\big(f(v)-1\big)+1< \sum_{v\in S}\big(f(v)-2\big)+2+\omega(G[S])$.
Now, it is enough to apply Theorem~\ref{thm:c=2:G[S]}.
}\end{proof}
%
%
%
%
%
%
%
%
%
Ellingham, Nam, and Voss~\cite{MR1871346} discovered the following result, when $g'$ is a positive function.
\begin{cor}\label{cor:Ellingham,Nam,Voss}
{Let $G$ be a connected graph.
If  for all $S\subseteq V(G)$, $\omega(G\setminus S)\le \sum_{v\in S}(f(v)-2)+2,$ then every $(g',f')$-factor can be extended to a connected $(g',f'+f-1)$-factor,
where $g'$ is a nonnegative
integer-valued function on $V (G)$, and $f'$ and $f$ are positive integer-valued functions on $V (G)$.
}\end{cor}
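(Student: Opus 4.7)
The plan is to reduce the statement to Theorem~\ref{thm:cor:Ellingham,Nam,Voss} by choosing an appropriate spanning forest inside the given $(g',f')$-factor, and then taking the union with the resulting spanning tree.

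Let $H$ be a $(g',f')$-factor of $G$. First I would pick a spanning forest $F$ of $H$ that has exactly the same vertex components as $H$, that is, $F$ restricts to a spanning tree of each connected component of $H$. In particular, $d_F(v)=0$ precisely when $v$ is isolated in $H$, in which case $d_H(v)=0$. Next, since by hypothesis $\omega(G\setminus S)\le \sum_{v\in S}(f(v)-2)+2$ holds for every $S\subseteq V(G)$, Theorem~\ref{thm:cor:Ellingham,Nam,Voss} produces a spanning tree $T$ of $G$ containing $F$ with
\[
d_T(v)\le f(v)+\max\{0,d_F(v)-1\}\quad\text{for every vertex }v.
\]

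Now define $H'=T\cup H$. Because $F\subseteq H$ and $F\subseteq T$ while $E(T)\setminus E(F)$ consists entirely of edges not in $H$ (the tree $T$ is obtained by \emph{extending} $F$ through the remaining graph), we have $E(T)\cap E(H)=E(F)$. Consequently
\[
d_{H'}(v)=d_H(v)+d_T(v)-d_F(v).
\]
If $d_F(v)\ge 1$, then $d_{H'}(v)\le d_H(v)+f(v)+(d_F(v)-1)-d_F(v)=d_H(v)+f(v)-1\le f'(v)+f(v)-1$. If $d_F(v)=0$, then also $d_H(v)=0$, so $d_{H'}(v)\le f(v)\le f'(v)+f(v)-1$ using the assumed positivity of $f'$. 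The lower bound $d_{H'}(v)\ge d_H(v)\ge g'(v)$ is immediate, and $H'$ is connected because it contains the spanning tree $T$.

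The only subtle point (and the main obstacle, such as it is) is the choice of $F$: one must ensure $E(T)\cap E(H)=E(F)$ so that when $H$ and $T$ are superimposed the shared edges are exactly the tree edges inside $H$; selecting $F$ as a spanning forest whose components coincide with those of $H$ achieves this, and simultaneously makes the case $d_F(v)=0$ force $d_H(v)=0$, which is exactly what allows the $+f(v)-1$ upper bound to hold even when $d_H(v)$ contributes nothing. The positivity of $f'$ is used precisely to absorb the $-1$ in that boundary case.
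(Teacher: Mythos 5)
Your proof is correct and follows essentially the same route as the paper: choose the spanning forest $F$ of the $(g',f')$-factor $H$ with the same vertex components, extend it via Theorem~\ref{thm:cor:Ellingham,Nam,Voss} to a spanning tree $T$, and take $T\cup H$. The only difference is that you spell out more explicitly why $E(T)\cap E(H)=E(F)$ and separate the $d_F(v)=0$ case, which the paper leaves implicit.
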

\begin{proof}
{Since $G$ is connected, it is obvious that $\omega(G\setminus \emptyset)<2+\omega(G[\emptyset])$.
Let $S$ be a nonempty subset of $V(G)$. Since $\omega(G[S])\ge 1$, we must have 
$\omega(G\setminus S)\le \sum_{v\in S}\big(f(v)-2\big)+2< \sum_{v\in S}\big(f(v)-2\big)+2+\omega(G[S])$.
Thus by Theorem~\ref{thm:c=2:G[S]}, we can extend an arbitrary given $(g',f')$-factor $F$ to a connected factor $H$ such that for each vertex $v$, $g'(v)\le d_F(v)\le d_H(v)\le f(v)+\max\{0,d_F(v)-1\}$. Since $f'(v)$ is positive, we have $d_{H}(v)\le f(v)+f'(v)-1$, regardless of $d_F(v)=0$ or not. 
Thus $H$ is the desired connected factor.
}\end{proof}
When we consider the special case $(g', f')=(0,1)$, Corollary~\ref{cor:Ellingham,Nam,Voss} becomes simpler as the following result. 
\begin{cor}\label{cor:matching}
{Let $G$ be a connected graph.
If  for all $S\subseteq V(G)$, $\omega(G\setminus S)\le \sum_{v\in S}(f(v)-2)+2$, 
then $G$ has a spanning $f$-tree containing an arbitrary given matching,
where $f$ is a positive integer-valued function on $V(G)$.
}\end{cor}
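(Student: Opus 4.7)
The plan is to apply Theorem~\ref{thm:cor:Ellingham,Nam,Voss} with the spanning forest $F$ chosen to be the given matching $M$, regarded as a spanning subgraph of $G$ by including every vertex of $G\setminus V(M)$ as a trivial component. Since $M$ is acyclic, it is indeed a spanning forest, so the hypothesis of Theorem~\ref{thm:cor:Ellingham,Nam,Voss} is available with this choice of $F$; the degree condition on $G$ is the same as in the present corollary. The theorem then produces a spanning tree $T$ containing $M$ such that
$$d_T(v)\le f(v)+\max\{0,d_M(v)-1\}$$
for every vertex $v\in V(G)$.

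The key observation is that for a matching $M$ one has $d_M(v)\in\{0,1\}$ at every vertex, and therefore $\max\{0,d_M(v)-1\}=0$ identically. The displayed bound collapses to $d_T(v)\le f(v)$, which is precisely the conclusion we want. No rounding, no parity issues, no further estimates are needed.

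In short, there is essentially no obstacle here; the corollary is an immediate consequence of the strengthened form of the Ellingham--Nam--Voss theorem obtained earlier as Theorem~\ref{thm:cor:Ellingham,Nam,Voss}. The salient point is that the improvement from $d_F(v)$ to $\max\{0,d_F(v)-1\}$ in the degree bound of Theorem~\ref{thm:Ellingham,Nam,Voss} is exactly what allows a forest of maximum degree one (a matching) to be absorbed into the spanning tree without charging any extra degree to the endpoints of its edges.
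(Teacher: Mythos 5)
Your argument is correct and is exactly the intended derivation: the paper states Corollary~\ref{cor:matching} without proof as an immediate consequence of Theorem~\ref{thm:cor:Ellingham,Nam,Voss}, and taking $F$ to be the given matching (with isolated vertices added to make it spanning) makes $\max\{0,d_F(v)-1\}=0$ at every vertex, collapsing the bound to $d_T(v)\le f(v)$.
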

\begin{remark}
{Note that  if every matching of a graph $G$ can be extended to a spanning $f$-tree, then for all  $S\subseteq  V(G)$, we must have $\omega(G\setminus S)\le \sum_{v\in S}(f(v)-1)+1-\alpha'(G[S])$, where $\alpha'(G[S])$ denotes the
number of edges in a maximum matching of the induced graph $G[S]$. In fact, if a graph $G$ has a spanning $f$-tree $T$ containing a given forest $M$, then by Lemma~\ref{lem:spanningforest},  we clearly  have $\omega(G\setminus S)\le \omega(T\setminus S) \le \sum_{v\in S}(f(v)-1)+1-e_M(S)$  for all  $S\subseteq  V(G)$.
}\end{remark}
%
The following lemma allows us to establish the next result, and also will be employed in the last section. 
\begin{lem}{\rm (Ellingham, Nam, and Voss \cite{MR1871346})}\label{subsection:conclusions:lem}
{If $G$ is a connected $K_{1,n}$-free simple graph with $n\ge 2$, then $\omega(G\setminus S)\le (n-2)|S|+1$ for all $S\subseteq V(G)$.
}\end{lem}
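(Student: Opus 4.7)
My plan is to prove the inequality by induction on $|S|$. The base case $|S|=0$ is immediate since $G$ is connected: $\omega(G)=1$, matching the desired bound $(n-2)\cdot 0+1=1$.

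For $|S|\ge 1$, I would pick any $v\in S$, set $S'=S\setminus\{v\}$, and let $C$ denote the unique component of $G\setminus S'$ that contains $v$. Removing $v$ from $G\setminus S'$ only affects the component $C$, so
\[
\omega(G\setminus S)=\omega(G\setminus S')-1+\omega(C\setminus v),
\]
and by the induction hypothesis applied to $S'$ it suffices to establish that $\omega(C\setminus v)\le n-1$.

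The heart of the argument is this local bound, which is where the $K_{1,n}$-free hypothesis enters. If $v$ has no neighbor in $C\setminus v$, then $C=\{v\}$ and $\omega(C\setminus v)=0$; otherwise, setting $k:=\omega(C\setminus v)\ge 1$, the connectivity of $C$ forces $v$ to have at least one neighbor in each component of $C\setminus v$, so I would pick one such neighbor $u_i$ from each component. I expect $\{v,u_1,\dots,u_k\}$ to induce a $K_{1,k}$ in $G$: because $C$ is an induced subgraph of $G$, any edge $u_iu_j$ of $G$ would lie in $C$, contradicting that $u_i$ and $u_j$ sit in distinct components of $C\setminus v$. The $K_{1,n}$-free assumption then forces $k\le n-1$.

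Putting the pieces together gives $\omega(G\setminus S)\le(n-2)(|S|-1)+1+(n-2)=(n-2)|S|+1$, which closes the induction. The only subtle point is verifying the pairwise non-adjacency of $u_1,\dots,u_k$ in the full graph $G$ rather than merely inside $C$; this follows from the routine observation that a component of an induced subgraph of $G$ is itself an induced subgraph of $G$, so non-adjacency in $C$ transfers back to non-adjacency in $G$.
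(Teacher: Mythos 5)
Your induction is correct and closes cleanly. The paper itself does not prove this lemma—it only cites it from Ellingham, Nam, and Voss \cite{MR1871346}—so there is no in-paper argument to compare against; your proof is a clean, self-contained version of the standard argument. The key local bound $\omega(C\setminus v)\le n-1$ is exactly right: since $C$ is an induced connected subgraph of $G$, picking one neighbor of $v$ in each component of $C\setminus v$ yields an induced star centered at $v$, and $K_{1,n}$-freeness caps its size at $n-1$. The decomposition $\omega(G\setminus S)=\omega(G\setminus S')-1+\omega(C\setminus v)$ and the final arithmetic $(n-2)(|S|-1)+1-1+(n-1)=(n-2)|S|+1$ both check out, and you correctly handle the degenerate case $C=\{v\}$.
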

Xu, Liu, and Tokuda~\cite{MR1658841} discovered the following result, when $g'$ is a positive function.
\begin{cor}\label{cor:Xu,Liu,Tokuda}
{If $G$ is a connected $K_{1,n}$-free simple graph with $n\ge 2$, then every $(g',f')$-factor can be extended to a connected $(g',f'+n-1)$-factor,
where $g'$ is a nonnegative
integer-valued function on $V (G)$ and $f'$ is a positive integer-valued function on $V (G)$.
}\end{cor}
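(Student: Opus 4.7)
The plan is to reduce the statement to a direct application of Corollary~\ref{cor:Ellingham,Nam,Voss} combined with Lemma~\ref{subsection:conclusions:lem}. Since the only hypotheses on $G$ are that it is connected and $K_{1,n}$-free (with $n\ge 3$), and since Corollary~\ref{cor:Ellingham,Nam,Voss} is phrased in terms of an arbitrary positive integer-valued function $f$ satisfying $\omega(G\setminus S)\le \sum_{v\in S}(f(v)-2)+2$, the natural move is to choose $f$ to be a suitable constant.

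First I would set $f(v)=n$ for every $v\in V(G)$; this is a positive integer-valued function since $n\ge 3$. Then $\sum_{v\in S}(f(v)-2)+2 = (n-2)|S|+2$ for every $S\subseteq V(G)$. By Lemma~\ref{subsection:conclusions:lem}, the hypothesis that $G$ is connected and $K_{1,n}$-free ensures $\omega(G\setminus S)\le (n-2)|S|+1$, and this upper bound is strictly less than $(n-2)|S|+2$. Hence the toughness-type condition required by Corollary~\ref{cor:Ellingham,Nam,Voss} holds with this choice of $f$.

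Next I would apply Corollary~\ref{cor:Ellingham,Nam,Voss} directly: for any $(g',f')$-factor with $g'$ nonnegative integer-valued and $f'$ positive integer-valued on $V(G)$, it can be extended to a connected $(g',\,f'+f-1)$-factor. Substituting $f\equiv n$ gives a connected $(g',\,f'+n-1)$-factor, which is exactly the conclusion we want.

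There is essentially no obstacle here since both ingredients are already available in the paper. The only mild check is that the integrality and positivity hypotheses of Corollary~\ref{cor:Ellingham,Nam,Voss} are met by the constant function $f\equiv n$, which is immediate from $n\ge 3$. So the proof reduces to a one-line invocation of Lemma~\ref{subsection:conclusions:lem} followed by a one-line invocation of Corollary~\ref{cor:Ellingham,Nam,Voss}.
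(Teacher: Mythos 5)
Your proof is correct and takes exactly the route the paper itself uses: invoke Lemma~\ref{subsection:conclusions:lem} to verify the hypothesis of Corollary~\ref{cor:Ellingham,Nam,Voss} with the constant function $f\equiv n$, then apply that corollary directly. The paper states this as a one-line proof; you have simply spelled out the intermediate arithmetic.
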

\begin{proof}
{Apply Lemma~\ref{subsection:conclusions:lem} and Corollary~\ref{cor:Ellingham,Nam,Voss} with $f(v)=n$.
}\end{proof}
%
%
%
%
%
%
%
%
%
%
\subsection{Graphs with high edge-connectivity}
A special case of Lemma~\ref{lem:Omega(G[S,F])} can easily conclude the following lemma, because 
$e_G(S, F)=e_G(S)\ge |S|-\omega(G[S])$
and $\omega(G\setminus [S,F])=\omega(G\setminus S)+|S|$ when $F$ is the trivial spanning forest.

\begin{lem}\label{lem:Omega(GS)}
{Let $G$ be a graph with $S\subseteq V(G)$. Then
$$\omega(G\setminus S)\le
 \begin{cases}
 \sum_{v\in S}\frac{d_G(v)-2}{k}\,+\frac{2}{k}\omega(G[S]),	&\text{if $G$ is $k$-edge-connected and 
$S\neq \emptyset$};\\ 
\sum_{v\in S}(\frac{d_G(v)-1}{k}-1)+1+\frac{1}{k}\omega(G[S]),	&\text{if $G$ is $k$-tree-connected}.
\end {cases}$$
}\end{lem}
%
Another generalization of Theorems~\ref{thm:k-edge} and~\ref{thm:k-tree} is given in the next theorem.
\begin{thm}\label{thm:GS:main}
{Let $G$ be a graph with $X\subseteq V(G)$. Then every factor $F$ can be extended to a connected factor $H$ such that for each $v\in X$, 
$$d_H(v)\le \max\{0,d_F(v)-1\}+
 \begin{cases}
\lceil \frac{d_G(v)-2}{k}\rceil+2,	&\text{if $G$ is $k$-edge-connected};\\ 
\lceil \frac{d_G(v)-1}{k}\rceil+1,	&\text{if $G$ is $k$-tree-connected};\\
\lceil \frac{d_G(v)}{k}\rceil+1,	&\text{if $G$ is $k$-edge-connected and $X$ is independent};\\ 
\lceil \frac{d_G(v)}{k}\rceil,	&\text{if $G$ is $k$-tree-connected and $X$ is independent}.
\end {cases}$$
Furthermore, for an arbitrary given vertex $u$, the upper bound can be reduced to
$\lfloor d_G(u)/k\rfloor+\max\{0,d_F(u)-1\}$.
}\end{thm}
\begin{proof}
{We may assume that $k\ge 2$, as the assertions trivially hold when $k=1$.
Since $G$ is connected, it is obvious that $\omega(G\setminus \emptyset)<2+\omega(G[\emptyset])$.
Let $S$ be a nonempty subset of $X$
 so that $\omega(G[S])\ge 1$.
If $G$ is $k$-edge-connected, then
by Lemma~\ref{lem:Omega(GS)}, we have 
$$\omega(G\setminus S)\le \sum_{v\in S}\frac{d_G(v)-2}{k}+\frac{2}{k}\omega(G[S])< 
 \sum_{v\in S}(f(v)-2)+2+\omega(G[S]),$$
where $f(u)= \lceil\frac{d_G(u)+1}{k}\rceil-1$ and $f(v)= \lceil\frac{d_G(v)-2}{k}\rceil+2$ for all $v\in V(G)\setminus \{u\}$.
Note that $f(u)=\lfloor \frac{d_G(u)}{k}\rfloor \ge 1$ because of $d_G(u)\ge k$.
If $G$ is $k$-tree-connected, then 
by Lemma~\ref{lem:Omega(GS)}, we also have 
$$\omega(G\setminus S)\le \sum_{v\in S}(\frac{d_G(v)-1}{k}-1)+1+\frac{1}{k}\omega(G[S])< 
\sum_{v\in S}(f(v)-2)+2+\omega(G[S]),$$
where $f(u)= \lceil\frac{d_G(u)+1}{k}\rceil-1$ and $f(v)= \lceil\frac{d_G(v)-1}{k}\rceil+1$ for all $v\in V(G)\setminus \{u\}$.
Thus the first two assertions follow from Theorem~\ref{thm:c=2:G[S]}.
Now, suppose that $X$ is an independent set.
If $G$ is $k$-edge-connected, then
by Lemma~\ref{lem:Omega(GS)}, we have 
$$\omega(G\setminus S)\le \sum_{v\in S}\frac{d_G(v)}{k}< 
\sum_{v\in S}(f(v)-1)+2,$$
where $f(u)= \lceil\frac{d_G(u)+1}{k}\rceil-1$ and $f(v)= \lceil\frac{d_G(v)}{k}\rceil+1$ for all $v\in X\setminus \{u\}$.
If $G$ is $k$-tree-connected, then 
by Lemma~\ref{lem:Omega(GS)}, we also have 
$$\omega(G\setminus S)\le \sum_{v\in S}(\frac{d_G(v)}{k}-1)+1< 
\sum_{v\in S}(f(v)-1)+2,$$
where $f(u)= \lceil\frac{d_G(u)+1}{k}\rceil- 1$ and $f(v)= \lceil\frac{d_G(v)}{k}\rceil$ for all $v\in X\setminus \{u\}$.
Thus the second two assertions follow from Corollary~\ref{cor:Omega(GS)}.
}\end{proof}
\begin{cor}
{If $G$ is a $k$-edge-connected graph with $\Delta(G)\le k(n-2)+2$ and $n\ge 2$, then every $(g',f')$-factor can be extended to a connected $(g',f'+n-1)$-factor, where $g'$ is a nonnegative integer-valued function on $V(G)$ and $f'$ is a positive integer-valued function on $V(G)$.
}\end{cor}
\begin{proof}
{Let $F$ be a $(g',f')$-factor of $G$.
By Theorem~\ref{thm:GS:main}, we can extend $F$ to a connected factor $H$ such that for each vertex $v$, $g'(v)\le d_F(v)\le d_H(v)\le n+\max\{0,d_F(v)-1\}$. Since $f'(v)$ is positive, we have $d_{H}(v)\le n+f'(v)-1$,  regardless of  $d_F(v)=0$ or not. 
Hence $H$ is the desired connected factor.
}\end{proof}
The following corollary makes a strengthened version for Lemma 2.2 (ii) in~\cite{MR1126990}.
\begin{cor}
{Let $G$ be a graph with a matching $M$ and let $f$ be a  positive integer-valued function on $V(G)$. 
If $G$ admits a spanning closed $f$-walk passing through the edges of $M$, then
 it has
a spanning $(f+1)$-tree $T$ containing  the edges of $M$. Furthermore, for an arbitrary given vertex $u$, we can have $d_T(u)\le f(u)$.
}\end{cor}
\begin{proof}
{Let $H$ be the Eulerian graph with $V(H)=V(G)$ obtained from
a spanning closed $f$-walk of $G$ passing through the edges of $M$, 
by inserting  $t$ copies of every edge $e$ of $G$ into $H$ in which $e$ is used $t$ times in the desired walk.
 Since $H$ is Eulerian, it is $2$-edge-connected. 
Thus by Theorem~\ref{thm:GS:main}, one can conclude that the graph $H$ has a spanning $(f+1)$-tree $T$ containing the edges of $M$ with $d_T(u)\le f(u)$,
 and so does $G$.
}\end{proof}
Another strengthened version of Lemma 2.2 (ii) in~\cite{MR1126990} is given in the following theorem.
This result allows one to deduce Corollary~\ref{cor:matching} form Theorem~\ref{thm:walk:spanning}. In Section~\ref{sec:walks},
we will conversely show that Corollary~\ref{cor:matching}  implies Theorem~\ref{thm:walk:spanning},
and the two are therefore equivalent.
\begin{thm}
{Let $G$ be a graph with a matching $M$ and let $f$ be a  positive integer-valued function on $V(G)$. 
If $G$ admits a spanning $f$-walk (not necessarily closed) passing through the edges of $M$, then
 it has
a spanning $(f+1)$-tree containing  the edges of $M$. 
}\end{thm}
\begin{proof}
{Let $H$ be the connected graph with $V(H)=V(G)$ obtained from
a spanning $f$-walk of $G$ passing through the edges of $M$, by inserting  $t$ copies of every edge $e$ of $G$ into $H$ in which $e$ is used $t$ times in the desired walk. Note that  all vertices of $H$, except possibly two vertices, have even degrees. This shows that
the graph $H$ can be made $2$-edge-connected by adding at most one edge.
Let $S\subseteq V(G)$. Obviously, for all components $C$ of $H\setminus S$, there is at least one edge with exactly one edge in $V(C)$. Moreover, all components of $C$ of $H\setminus S$, except possibly two components, have at least two edges of $H$ with exactly one end in $V(C)$. Therefore, there are at least $2\omega(H\setminus S)-2$ edges of $H$ with exactly on edge in $S$.
This implies that $2\omega(H\setminus S)-2\le \sum_{v\in S}d_H(v)-2e_H(S)$ and hence 
$\omega(H\setminus S)\le \sum_{v\in S}(d_H(v)/2-1)+1+\omega(H[S])$. Thus by Theorem~\ref{thm:c=2:G[S]}, one can conclude that the graph $H$ has a spanning $(f+1)$-tree $T$ containing the edges of $M$, and so does $G$.
}\end{proof}
%
%
%
%
%
%
%
%
%
%
%
%
\section{Toughness and the existence of connected $\{r,r+1\}$-factors}
\label{sec:toughness}
\label{sec:tougn-enough}
In this section, we shall introduce a combined stronger version  for Theorems~\ref{intro:thm:improvement} and~\ref{Intro:thm:impliesimproves}.
For this purpose, we need to establish the following  lemma that  provides a  relationship between $\omega(G\setminus S)$ and $\omega(G\setminus [S,F])$. 
\begin{lem}\label{lem:Omega(G[S,F]), Omgea(GS)}
{Let $G$ be a graph with a spanning forest $F$. 
Let $c\in [2,\infty)$ be a real number and let $\xi:V(G)\rightarrow [0,1]$ be a real function 
in which for every component $C$ of $F$, 
$\sum_{v\in V(C)}\xi(v)\ge 1-\frac{1}{c-1}(|V(C)|-1)$.
If $S\subseteq V(G)$,
then $$\omega(G\setminus [S,F])\le \omega(G\setminus S)+\frac{1}{c-1}e_F(S)+
\sum_{v\in S}\xi(v).$$
}\end{lem}
\begin{proof}
{Since every component $C$ of $F$ whose vertices entirely lie in the set $S$ has exactly
$|V(C)| -1$ edges with both ends in $S$, we must have $1 \le \sum_{v\in V(C)}\xi(v)+\frac{1}{c-1}e_{C}(S)$.
Thus $t_s \le \sum_{v\in S'}\xi(v)+\frac{1}{c-1}e_{F}(S')\le \sum_{v\in S}\xi(v)+\frac{1}{c-1}e_{F}(S),$
where  $S'$ is the set of all vertices belonging to the components of $F$
 whose vertices entirely lie in the set $S$, and 
$t_s$ is the number of those components.
Therefore, $\omega(G\setminus [S,F])\le \omega(G\setminus S)+ t_s\le \sum_{v\in S}\xi(v)+\frac{1}{c-1}e_{F}(S)$.
Hence the lemma holds.
}\end{proof}
The following theorem is essential in this section.
\begin{thm}\label{thm:tough-enough}
{Let $G$ be a graph with a factor $F$ and 
let $h$ be a nonnegative integer-valued function on $V(G)$.
Let $c\in [2,\infty) $ be a real number and let $\xi:V(G)\rightarrow [0,1]$ be a real function 
in which for every component $C$ of $F$, 
$\sum_{v\in V(C)}\xi(v)\ge 1-\frac{1}{c-1}(|V(C)|-1)$.
If for every $S\subseteq V(G)$,
$$ \omega (G\setminus S) <
\sum_{v\in S}\big(h(v)-\frac{1}{c-1}-\xi(v)\big)+2+\frac{1}{c-1}\omega(G[S])-
\frac{c-2}{c-1}\min\{\lfloor\sum_{v\in S}\frac{h(v)}{2}\rfloor,|S|-\omega(G[S])\},$$
then $G$ has a connected factor $H$ containing $F$ such that for each vertex $v$, $d_H(v) \le h(v)+d_F(v)$.
}\end{thm}
\begin{proof}{
First, suppose that $F$ is a forest.
Let $\mathcal{T}$ be a spanning $(h+d_F)$-forest of $G$ containing $F$ with the minimum $\omega(\mathcal{T})$.
Define $S$ to be a subset of $V (G)$ with the properties described in Theorem~\ref{thm:Preliminary}.
Put $\mathcal{F}=\mathcal{T}\setminus E(F)$.
By Lemma~\ref{lem:spanningforest} and Theorem~\ref{thm:Preliminary},
$$\sum_{v\in S} h(v) = \sum_{v\in S} d_\mathcal{F}(v) = 
\omega(\mathcal{T}\setminus [S, F]) -\omega(\mathcal{T})+ e_{\mathcal{F}}(S),$$
and so
\begin{equation}\label{eq:thm:2-connected:0}
 \omega(\mathcal{T}) = \omega(G\setminus [S, F])- \sum_{v\in S} h(v) + e_{\mathcal{F}}(S).
\end{equation}
Also, by Lemma~\ref{lem:Omega(G[S,F]), Omgea(GS)},
\begin{equation}\label{eq:thm:2-connected:1}
\omega(G\setminus [S,F]) \le 
\omega(G\setminus S)+\frac{1}{c-1}e_F(S)+\sum_{v\in S}\xi(v).
\end{equation}
Since 
$e_\mathcal{F}(S)+e_F(S)=e_{\mathcal{T}}(S) \le |S|-\omega(G[S])$,
$$e_\mathcal{F}(S)+\frac{1}{c-1}e_F(S)\le \frac{c-2}{c-1}e_\mathcal{F}(S)+\frac{1}{c-1}(|S|-\omega(G[S])).$$
In addition, since 
 $e_\mathcal{F}(S)
\le\lfloor\frac{1}{2}\sum_{v\in S}d_{\mathcal{F}}(v) \rfloor=\lfloor\frac{1}{2}\sum_{v\in S}h(v)\rfloor$, we must have
\begin{equation}\label{eq:thm:2-connected:2}
e_\mathcal{F}(S)+\frac{1}{c-1}e_F(S)
\le \frac{c-2}{c-1}\min\{\lfloor\frac{1}{2}\sum_{v\in S}h(v)\rfloor, |S|-\omega(G[S])\}+\frac{1}{c-1}(|S|-\omega(G[S])).
\end{equation}
Therefore, Relations~(\ref{eq:thm:2-connected:0}),~(\ref{eq:thm:2-connected:1}), and~(\ref{eq:thm:2-connected:2}) can conclude that
\begin{equation*}
\omega(\mathcal{T}) \le
\omega(G\setminus S)-
\sum_{v\in S}\big(h(v)-\frac{1}{c-1}-\xi(v)\big)-\frac{1}{c-1}\omega(G[S])+
\frac{c-2}{c-1}\min\{\lfloor\sum_{v\in S}\frac{h(v)}{2}\rfloor,|S|-\omega(G[S])\}
< 2.
\end{equation*}
Hence $\omega(\mathcal{T}) =1$ and the theorem holds.
Now, suppose that $F$ is not a forest.
Remove some of the edges of the components of $F$ until the resulting graph $F'$ becomes a forest such that their components have the  same vertices.
It is enough, now, to apply the theorem on $F'$ and finally add the edges of $E(F)\setminus E(F')$ 
to that explored tree. 
}\end{proof}
\begin{cor}\label{cor:path-cycle}
{Let $G$ be a simple graph and let $F$ be a factor of $G$ with $\Delta(F)\le 2$. Then $F$ can be extended to connected factor $H$ with $\Delta(H)\le 3$,
if for all $S\subseteq V(G)$, $\omega(G\setminus S)\le \frac{1}{4}|S|+1$.
}\end{cor}
\begin{proof}
{For each vertex $v$, define $h(v)=3-d_F(v)$.
Also, define $\xi(v)=1$ when $d_F(v)=0$, and 
define $\xi(v)=1/4$ when $d_F(v)=1$, 
and $\xi(v)=0$ otherwise. According to these definitions, $1/4\le  3h(v)/4-1/2-\xi(v)$.
Thus if $S\subseteq V(G)$, then 
$ \omega (G\setminus S) \le \frac{1}{4}|S|+1<
\sum_{v\in S}\big(\frac{c}{2c-2}h(v)-\frac{1}{c-1}-\xi(v)\big)+2$,
where $c=3$.
Let $C$ be a component of $F$.
Since $F$ has no multiple edges, if $C$ has a vertex  with degree two, then it must contain at least three vertices, which implies that $\sum_{v\in V(C)}\xi(v)\ge 0\ge 1-(|V(C)|-1)/2$.
Otherwise, we again have $\sum_{v\in V(C)}\xi(v)\ge 1-(|V(C)|-1)/2$ regardless of $C$ contains two vertices with degree one or not.
 Hence it is enough to apply Theorem~\ref{thm:tough-enough} to complete the proof. 
}\end{proof}
The following corollary improves Theorem~\ref{Intro:thm:impliesimproves} and implies Theorem~\ref{intro:thm:improvement}.
\begin{cor}\label{cor:c=3:G[S]}
{Let $G$ be a graph with a factor $F$ of which every non-trivial component  contains at least $c$ vertices with $c \ge 2$.
Let $h$ be a nonnegative integer-valued function on $V(G)$.
If for all $S\subseteq V(G)$,
$$ \omega (G\setminus S) < \sum_{v\in S}\big(\frac{c}{2c-2}h(v)-\frac{1}{c-1}\big)+2+\frac{1}{c-1}\omega(G[S])-|\{v\in S:d_F(v)=0\}|,$$
then $G$ has a connected factor $H$ containing $F$  such that for each vertex $v$, $d_H(v) \le h(v)+d_F(v)$.
}\end{cor}
\begin{proof}
{For each vertex $v$, define $\xi(v)=1$ when $d_F(v)=0$, and define $\xi(v)=0$ otherwise.
Let $C$ be a component of $F$.
If  $C$ is a non-trivial component, then by the assumption, we must have $\sum_{v\in V(C)}\xi(v)\ge 0 \ge 1-(|V(C)|-1)/(c-1)$.
Thus $\sum_{v\in V(C)}\xi(v)\ge 1-(|V(C)|-1)/2$ regardless of $C$ is a trivial component or not.
 Hence it is enough to apply Theorem~\ref{thm:tough-enough} to complete the proof.
}\end{proof}
%
%
%
%
\begin{cor}\label{cor:c=2:G[S]}
{Let $G$ be a graph with a factor $F$. Let $f$ be a positive integer-valued function on $V (G)$. If for all $S\subseteq V(G)$, 
$$\omega(G\setminus S)< \sum_{v\in S}\big(f(v)-2\big)+2+\omega(G[S]),$$ 
then  $G$ has a connected factor $H$ containing $F$ such that for each vertex $v$,
$d_H(v)\le f(v)+\max\{0, d_F(v)-1\}$.
}\end{cor}
\begin{proof}
{Apply Corollary~\ref{cor:c=3:G[S]}  with $c=2$ and $h(v)=f(v)-1+\max\{0, 1-d_F(v)\}$.
}\end{proof}
We here derive the following corollary from Corollary~\ref{cor:c=3:G[S]} by comparing their conditions.
\begin{cor}{\rm (\cite{MR1871346})}\label{cor:rho}
{Let $G$ be a connected graph with a factor $F$ 
of which every component contains at least $c$ vertices with $c \ge 2$.
Let $h$ be a nonnegative integer-valued function on $V(G)$.
If for every $S\subseteq V(G)$, at least one of the following conditions holds:
\begin{enumerate}{
\item [$\bullet$] 
$\omega (G\setminus S) < \sum_{v\in S}\big(\frac{1}{2}h(v)-\frac{1}{c}\big)+2$.
\item [$\bullet$] 
$ \omega (G\setminus S) < \sum_{v\in S}\frac{\rho-2}{2\rho-2}h(v)+2+\frac{1}{\rho-1}$  and $ \min\{h(v):v\in S\}> 0$, where $\rho=c\min\{h(v):v\in S\}$.
}\end{enumerate}
then $G$ has a connected factor $H$ containing $F$  such that for each vertex $v$, $d_H(v) \le h(v)+d_F(v)$.
}\end{cor}
\begin{proof}
{Since $G$ is connected, it is obvious that $ \omega (G\setminus \emptyset) <2+\frac{1}{c-1}\omega(G[\emptyset])$.
Let $S$ be a nonempty subset of $V(G)$ and set $S_0=\{v\in S:h(v)=0\}$.
Take $S_0'$ to be a subset of $S_0$ with $|S_0|-1 \le |S_0'|<|V(G)|$.
If $|S_0|\ge 1$, then the first condition of the theorem must hold for the vertex set $S_0'$.
Thus $1\le \omega (G\setminus S'_0) <-|S_0'|/c+2$. This implies that $|S'_0|< c$. Hence $|S_0|\le c$
 regardless of $S_0$ is empty or not. Thus
$$ \sum_{v\in S}\big(\frac{1}{2}h(v)-\frac{1}{c}\big)
 \le \sum_{v\in S}\big(\frac{c}{2c-2}h(v)-\frac{1}{c-1}\big)+\frac{|S_0|}{c(c-1)}\le  \sum_{v\in S}\big(\frac{c}{2c-2}h(v)-\frac{1}{c-1}\big)+\frac{1}{c-1}.$$
In addition, if $|S_0|= 0$ then
$$ \sum_{v\in S}\frac{\rho-2}{2\rho-2}h(v)=
 \sum_{v\in S}(\frac{c}{2c-2}-\frac{1}{2c-2}-\frac{1}{2\rho-2})h(v)\le 
 \sum_{v\in S}(\frac{c}{2c-2}h(v)-\frac{1}{c-1}).$$
More precisely,  $\frac{1}{2c-2}+\frac{1}{2\rho-2}=\frac{1}{c-1}$ when $\rho=c$, and
$\frac{1}{2c-2}h(v) \ge \frac{1}{c-1}$ when $\rho \ge 2c$.
Therefore, by the assumption, one can conclude that $\omega (G\setminus S) < \sum_{v\in S}\big(\frac{c}{2c-2}h(v)-\frac{1}{c-1}\big)+2+\frac{1}{c-1}\omega(G[S])$. 
Hence it is enough to apply Corollary~\ref{cor:c=3:G[S]} to complete the proof.
}\end{proof}
When we consider the special case $h=1$, Corollary~\ref{cor:rho} becomes simpler as the following version.
\begin{cor}{\rm(\cite{MR1740929})}\label{cor:insertingMatching}
{Let $G$ be a connected graph with a factor $F$ of which every component contains at least $c$ vertices with $c \ge 2$.
If for all $S\subseteq V(G)$,
$$ \omega (G\setminus S)
\le \frac{c-2}{2c-2}|S|+2+\frac{1}{2c-2},$$
then $G$ has a connected factor $H$ containing $F$  such that for each vertex $v$, $d_H(v) \in \{d_F(v),d_F(v)+1\}$.
}\end{cor}
Enomoto, Jackson, Katerinis, and Saito (1985)~\cite{MR785651} showed that every $r$-tough graph $G$ of order at least $r+1$ with $r|V(G)|$ even admits an $r$-factor. 
For the case that $r|V(G)|$ is odd, the same arguments can imply that the graph $G$ admits 
a factor such that whose degrees are $r$, except for a vertex with degree  $r+1$.
A combination of Corollary~\ref{cor:c=3:G[S]} and this result can conclude the following corollary.
\begin{cor}{\rm(\cite{MR1871346, MR1740929})}\label{cor:r-tough:r-regular}
{Every $r$-tough graph of order at least $r+1$ with $r\ge 3$ admits a connected $\{r,r+1\}$-factor.
}\end{cor}
\begin{proof}
{We may assume that $G$ is an $r$-tough simple graph, by deleting multiple edges from $G$ (if necessary). 
Let $F$ be an $\{r,r+1\}$-factor of $G$ such that each of whose vertices has degree $r$, except for at most one vertex $u$ with degree $r+1$~\cite{MR785651}. Note that every component of $F$ must contain at least $r+1$ vertices.
Let $S$ be a subset of $V(G)$. Since $G$ is $r$-tough, 
$$\omega(G\setminus S)\le
 \frac{1}{r}|S|+1\le 
\frac{r-1}{2r}|S|+1<
 \sum_{v\in S}\big(\frac{c}{2c-2}h(v)-\frac{1}{c-1}\big)+2+\frac{1}{c-1}\omega(G[S]),$$
where $c=r+1$, $h(u)=0$, and $h(v)=1$ for each vertex $v$ with $v\neq u$.
Thus by applying Corollary~\ref{cor:c=3:G[S]}, 
the graph $G$ has a connected factor $H$ containing $F$ 
such that  for each vertex $v$, $d_H(v) \le h(v) + d_F(v)$, and also $d_H(u)=d_F(u)$.
This implies that $H$ is a connected $\{r,r+1\}$-factor. 
}\end{proof}
\begin{remark}
{Note that Corollary~\ref{cor:r-tough:r-regular} can  be proved by Corollary~\ref{cor:insertingMatching},
 except for the case that $r|V(G)|$ is odd, and it
can be proved by Corollary~\ref{cor:rho}, except for the case that $r=3$ and $|V(G)|$ is odd.
}\end{remark}
\section{Applications to spanning closed walks}
\label{sec:walks}
Our aim in this section is to prove  
 a long-standing conjecture due to Jackson and Wormald~\cite{MR1126990} 
with a stronger version. Before doing so, we state some results on spanning parity forests.
%
%
%
%
%
%
%
%
\subsection{Spanning parity $f$-forests}
In 1985 Amahashi~\cite{MR951772} introduced a criterion for the existence of a spanning forest 
with bounded maximum degree in which all vertices have odd degree.
Later, Yuting and Kano (1988) generalized it by establishing the following theorem.
We denote below by $odd(G)$ the number of components of $G$ with odd order.
\begin{thm}{\rm (\cite{MR956194})}\label{thm:Yuting-Kano}
{Let $G$ be a graph and let $f$ be an odd positive integer-valued function on $V(G)$.
Then $G$ has a spanning $f$-forest with odd degree vertices if and only if for all $S\subseteq V(G)$,
$$odd(G\setminus S)\le \sum_{v\in S} f(v).$$
}\end{thm}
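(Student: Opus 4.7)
The plan is to handle the two implications separately, with sufficiency reduced to Tutte's $1$-factor theorem via a cycle-removal step.

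\textit{Necessity.} Assume $F$ is a spanning $f$-forest of $G$ with all odd vertex degrees, and fix $S \subseteq V(G)$. For each odd-order component $C$ of $G \setminus S$, the handshake identity gives
\[
\sum_{v \in V(C)} d_F(v) = 2\,e_F(V(C)) + e_F(V(C), S),
\]
and the left-hand side is a sum of $|V(C)|$ odd numbers with $|V(C)|$ odd, hence odd. So $e_F(V(C),S)$ is odd and in particular at least $1$. Summing over all odd components of $G\setminus S$ yields $odd(G\setminus S)\le \sum_{v\in S} d_F(v)\le \sum_{v\in S} f(v)$.

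\textit{Sufficiency, step 1.} I would first reduce the problem to finding \emph{any} spanning subgraph $H$ of $G$ satisfying $d_H(v)$ odd and $d_H(v)\le f(v)$ for every $v$, without requiring $H$ to be a forest. Indeed, if such an $H$ contains a cycle $C$, every $v\in V(C)$ has $d_H(v)\ge 2$; since $d_H(v)$ is odd, in fact $d_H(v)\ge 3$ on $V(C)$. Removing $E(C)$ changes each degree by $0$ or $2$, so the resulting spanning subgraph still has all odd degrees (at least $1$) and still satisfies $d_H\le f$. Iterating destroys every cycle and produces the desired spanning $f$-forest.

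\textit{Sufficiency, step 2.} To produce such an $H$, I would apply Tutte's $1$-factor theorem to an auxiliary graph $G^*$ constructed by attaching, at each $v\in V(G)$, a gadget with $f(v)-1$ new vertices (suitably paired) that encodes the constraint ``choose an odd subset of at most $f(v)$ of the incident edges at $v$.'' The intended property is that perfect matchings of $G^*$ biject with spanning subgraphs $H\subseteq G$ whose degrees at every vertex are odd and bounded by $f$; the case $f\equiv 1$ should recover Tutte--Berge as a sanity check. The main obstacle is calibrating the gadget so that Tutte's condition on $G^*$, namely $o(G^*\setminus T)\le |T|$ for all $T\subseteq V(G^*)$, translates \emph{exactly} to the hypothesis $odd(G\setminus S)\le \sum_{v\in S} f(v)$ on $G$. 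This would be verified by a case analysis on how $T$ intersects each gadget, with the extremal case being $T\cap V(G)=S$ together with every gadget attached to $S$ swept into $T$ so that each contributes its full $f(v)$ to the right-hand side. Once this equivalence of conditions is in place, Tutte's theorem delivers a perfect matching of $G^*$, whose projection onto $E(G)$ is the spanning subgraph $H$ required by step 1.
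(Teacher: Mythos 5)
The paper cites this theorem from Yuting and Kano without supplying a proof, so there is no internal argument to compare your attempt against; I will evaluate it on its own. Your necessity direction is correct: for an odd-order component $C$ of $G\setminus S$ you correctly write $\sum_{v\in V(C)} d_F(v)=2e_F(V(C))+e_F(V(C),S)$ (edges of $F$ from $V(C)$ cannot reach another component of $G\setminus S$), observe that the left side is a sum of an odd number of odd terms and hence odd, conclude $e_F(V(C),S)\ge 1$, and then charge each such crossing edge to its unique endpoint in $S$ to get $odd(G\setminus S)\le\sum_{v\in S}d_F(v)\le\sum_{v\in S}f(v)$. Step 1 of your sufficiency reduction is also sound: on any cycle of $H$ every vertex has odd degree at least $2$, hence at least $3$, so deleting the cycle edges preserves odd positive degrees and the bound $d_H\le f$, and iterating leaves a spanning $f$-forest with odd degrees.

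Step 2, however, is a plan rather than a proof, and that is where the entire difficulty of the theorem lives. You propose to attach at each vertex $v$ a gadget with $f(v)-1$ new vertices, ``suitably paired,'' and invoke Tutte's theorem, but the gadget is never specified: ``suitably paired'' is carrying the whole burden of encoding the constraint ``select an odd subset of at most $f(v)$ of the $d_G(v)$ half-edges at $v$.'' A gadget for such a parity-plus-cap constraint must in general depend on $d_G(v)$ as well as on $f(v)$ (the usual constructions first split $v$ into $d_G(v)$ edge-copies before attaching auxiliary structure), and a gadget with only $f(v)-1$ extra vertices is not obviously capable of enforcing both the parity and the cap. Even granting a correct gadget, Tutte's condition $o(G^*\setminus T)\le|T|$ must be verified for \emph{every} $T\subseteq V(G^*)$, not just for the extremal $T$ consisting of $S$ together with the full gadgets over $S$; one must show that an arbitrary $T$ can be shifted to that form without decreasing deficiency, and that argument cannot even be started without a concrete gadget in hand. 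Until the gadget is written down, the bijection between perfect matchings of $G^*$ and admissible subgraphs of $G$ is proved, and the deficiency computation is carried out for all $T$, the sufficiency direction remains unproven.
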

 Kano, Katona, and Szab\'o (2009) 
studied a more general version for Theorem~\ref{thm:Yuting-Kano} which gives a criterion for the existence of parity $f$-forests. 
We denote below by $odd_h(G)$ the number of components of $G$ with odd number of vertices $v$ with $h(v)$ odd.
\begin{thm}\label{thm:Devote}{\rm (\cite{MR2600471})}
{Let $G$ be a graph and let $h$ be a nonnegative integer-valued function on $V(G)$.
Then $G$ has a spanning $h$-forest $F$ such that for each vertex $v$, $d_F(v)$ and $h(v)$ have the same parity, if and only if
 for all $S\subseteq V(G)$,
$$odd_h(G\setminus S)\le \sum_{v\in S} h(v).$$
}\end{thm}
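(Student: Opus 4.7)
The plan is to establish necessity by a short parity argument and to reduce sufficiency to Theorem~\ref{thm:Yuting-Kano} through an auxiliary construction that turns even values of $f$ into odd ones.

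For necessity, suppose such an $F$ exists and fix $S\subseteq V(G)$. For each component $C$ of $G\setminus S$, every edge of $F$ with both ends in $V(C)$ contributes $2$ to $\sum_{v\in V(C)}d_F(v)$, every edge of $F$ from $V(C)$ to $S$ contributes $1$, and no other edges of $F$ are incident to $V(C)$. Hence $\sum_{v\in V(C)}d_F(v)$ and the number $m_C$ of $F$-edges joining $V(C)$ and $S$ have the same parity. The parity hypothesis on $F$ gives $\sum_{v\in V(C)}d_F(v)\equiv |\{v\in V(C):f(v)\text{ odd}\}|\pmod 2$, so any $C$ contributing to $odd_f(G\setminus S)$ satisfies $m_C\ge 1$. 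Summing yields $odd_f(G\setminus S)\le \sum_C m_C\le \sum_{v\in S}d_F(v)\le\sum_{v\in S}f(v)$.

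For sufficiency, I would first dispose of vertices $v$ with $f(v)=0$: these must be isolated in any valid $F$, and replacing $S$ by $S\cup\{v:f(v)=0\}$ in the hypothesis shows that the condition descends to $G':=G-\{v:f(v)=0\}$, so we may assume $f(v)\ge 1$ everywhere. Setting $V_e:=\{v:f(v)\text{ even}\}$, the plan is to build $G^*$ by attaching, for each $v\in V_e$, a new pendant neighbor $v'$, and to define $f^*(v)=f(v)+1$ on $V_e$, $f^*(v')=1$, and $f^*=f$ elsewhere, so that every $f^*$-value becomes an odd positive integer. A spanning $f^*$-forest $F^*$ of $G^*$ with all odd degrees, supplied by Theorem~\ref{thm:Yuting-Kano}, is forced to contain each pendant edge $vv'$ (since $d_{F^*}(v')=1$); deleting these pendant edges produces a spanning forest $F$ of $G$ with $d_F(v)\le f(v)$ and $d_F(v)\equiv f(v)\pmod 2$ for every $v$, as required.

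The hard part will be to verify the Yuting--Kano hypothesis $odd(G^*\setminus S^*)\le\sum_{v\in S^*}f^*(v)$ for every $S^*\subseteq V(G^*)$. Writing $S^*=S\cup T$ with $S\subseteq V(G)$ and $T\subseteq\{v':v\in V_e\}$, I intend to induct on $|T|$. For the base case $T=\emptyset$, a direct count shows that each component $C$ of $G\setminus S$ becomes a sub-component of $G^*\setminus S^*$ of size $|V(C)|+|V(C)\cap V_e|$, whose parity equals that of $|\{v\in V(C):f(v)\text{ odd}\}|$, while each $v\in V_e\cap S$ contributes an isolated $v'$. This gives $odd(G^*\setminus S^*)=odd_f(G\setminus S)+|V_e\cap S|$ and $\sum_{v\in S^*}f^*(v)=\sum_{v\in S}f(v)+|V_e\cap S|$, reducing the required inequality to the hypothesis of the theorem. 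For the inductive step, moving a new $v'$ into $T$ raises the right side by $1$ and shifts the left side by exactly $\pm 1$ (either an isolated $v'$ disappears, or a sub-component shrinks by one and flips parity), so the inequality is preserved. This pendant-by-pendant case analysis, together with the correct design of the auxiliary graph, is the delicate step of the reduction.
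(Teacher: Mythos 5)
The paper states Theorem~\ref{thm:Devote} as a cited result from Kano, Katona, and Szab\'o and supplies no proof, so there is nothing in-text to compare against; your argument must be judged on its own. It checks out. The necessity half is a clean parity count: for each component $C$ of $G\setminus S$, the number $m_C$ of $F$-edges from $C$ to $S$ has the same parity as $\sum_{v\in V(C)}d_F(v)\equiv |\{v\in V(C):f(v)\text{ odd}\}|$, so every component counted by $odd_f(G\setminus S)$ contributes at least one such edge, and $\sum_C m_C\le\sum_{v\in S}d_F(v)\le\sum_{v\in S}f(v)$. The sufficiency half is a genuine reduction to Theorem~\ref{thm:Yuting-Kano}: your pendant construction forces every $f^*$-value to be odd and positive, the degree-one constraint at $v'$ forces $vv'\in E(F^*)$, and stripping the pendant edges leaves a spanning forest with $d_F(v)\le f(v)$ of the correct parity. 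The bookkeeping in the base case ($odd(G^*\setminus S)=odd_f(G\setminus S)+|V_e\cap S|$ versus $\sum_{v\in S}f^*(v)=\sum_{v\in S}f(v)+|V_e\cap S|$) and the $\pm1$ analysis in the inductive step both hold; also, your preliminary deletion of vertices with $f(v)=0$ is justified since adding them all to $S$ leaves the hypothesis unchanged while $G\setminus S$ is unaffected. A minor wording slip: the set you call a ``sub-component'' of $G^*\setminus S^*$ is in fact a full component, namely $C$ together with its attached pendants. I cannot say whether Kano--Katona--Szab\'o's original proof uses the same pendant reduction or a direct Tutte-type deficiency argument, but your route via Theorem~\ref{thm:Yuting-Kano} is self-contained, short, and fits the paper's toolkit well, since both theorems are already quoted there.
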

\begin{proof}
{The proof presented here is introduced in~\cite[Section 1]{MR2600471} implicitly.
Obviously,  if $G$ has a spanning $h$-forest $F$ such that for each vertex $v$, $d_F(v)$ and $h(v)$ have the same parity, then for every $C$ component of $G\setminus S$ with $\sum_{v\in V(C)}h(v)$ odd, there must be an edge of $F$ with one end in $V(C)$ and the other one in $S$, where $S\subseteq V(G)$.
Thus $odd_h(G\setminus S)  \le \sum_{v\in S}d_F(v)\le  \sum_{v\in S} h(v)$.
Now, it remains to prove  the sufficiency. 
Define $G_0$ to be the graph obtained from $G$ by adding a new vertex $u'$  
and a pendant edge $u'u$ corresponding to each $u\in U=\{v\in V(G): h(v)\text{  is even}\}$.
 Let us  define $f(u)=h(u)+1$ and  $f(u')=1$  for all $u\in U$, and  $f(v)=h(v)$ for all $v\in V(G)\setminus U$.
Note that $f$ is an odd positive integer-valued function on $V(G_0)$.
It is not hard to check that for every $S\subseteq V(G_0)$, 
$$odd(G_0\setminus S)\le odd_h(G\setminus S)+|S\cap (U\cup U')|,$$ 
where $U'=\{u': u\in U\}$.
This implies that 
$odd(G_0\setminus S)\le \sum_{v\in S\setminus U'} h(v)+|S\cap (U\cup U')| = \sum_{v\in S} f(v)$.
Thus by Theorem~\ref{thm:Yuting-Kano}, the graph $G_0$ has a spanning $f$-forest $F_0$ in which all vertices have odd degree. Obviously, this forest contains all inserted pendant edges and so by removing them we can find a forest $F$ of $G$ with the desired properties. 
}\end{proof}
We shall below derive a conclusion of Theorem~\ref{thm:Devote}, which will be used in the subsequent subsection.
This  result is proved in~\cite{arXiv:Kano-Lu} when $f$ is an odd positive integer-valued function.
\begin{cor}\label{cor:Devote}
{Let $G$ be a graph and let $f$ be a positive integer-valued function on $V(G)$.
Then  for all $S\subseteq V(G)$, $$\omega(G\setminus S)\le \sum_{v\in S} (f(v)-1)+1,$$
if and only if for every  $Q\subseteq V(G)$ with  even size, the graph $G$ has a spanning $f$-forest $F$ such that $Q=\{v\in V(G):d_F(v) \text{ is odd}\}$.
}\end{cor}
\begin{proof}
{We first prove the necessity. Let $Q$ be a subset of $V(G)$ with even size. For each vertex $v$, 
define $h(v)$ to be either $f(v)$ or $f(v)-1$ such that $\{v\in V(G):h(v) \text{ is odd}\}=Q$.
Clearly, $\sum_{v\in V(G)}h(v)$ is even.
Let $S\subseteq V(G)$.
By the assumption, $odd_{h}(G\setminus S)\le \omega(G\setminus S)\le  \sum_{v\in S} (f(v)-1)+1\le \sum_{v\in S}h(v)+1$.
It is easy to check that $odd_{h}(G\setminus S)+\sum_{v\in S}h(v)$ and $\sum_{v\in V(G)}h(v)$ have the same parity 
and so
 $odd_{h}(G\setminus S)$ and $\sum_{v\in S}h(v)$ have the same parity.
 Thus $odd_{h}(G\setminus S)\le \sum_{v\in S}h(v).$
Therefore, by Theorem~\ref{thm:Devote}, the graph $G$ has a 
spanning $h$-forest $F$
 such that for each vertex $v$, $d_F(v)$ and $h(v)$ have the same parity.
Hence the necessity is proved.

Now, we shall prove the sufficiency. Let $S\subseteq V(G)$. We may assume that $G\setminus S$
contains a component $C'$.
For each $v\in S$, define $h(v)=f(v)-1$, and
 for each $v\in V(G)\setminus S$,  define $h(v)$ to be either $f(v)$ or $f(v)+1$
such that for every other component $C$ of $G\setminus S$,  $\sum_{v\in V(C)}h(v)$ is odd, 
and  also $\sum_{v\in V(C')}h(v)$ and $\sum_{v\in V(G)\setminus V(C')}h(v)$ have the same parity.
Since  $\sum_{v\in V(G)}h(v)$ is even, by the assumption, $G$ has a spanning $f$-forest $F$ such that for each vertex $v$, $d_F(v)$ and $h(v)$ have the same parity.
Thus $F$ is also a spanning $h$-forest of $G$ and $\omega(G\setminus S)-odd_h(G\setminus S)\in \{0,1\}$. 
Therefore, by Theorem~\ref{thm:Devote}, 
$\omega(G\setminus S)\le odd_h(G\setminus S)+1\le \sum_{v\in S}h(v)+1=\sum_{v\in S}(f(v)-1)+1$.
Hence the proof is completed.
}\end{proof}
The following result improves the upper bounds in Theorems~\ref{thm:k-edge} and~\ref{thm:k-tree}
when the existence of parity forests are considered.
\begin{cor}\label{cor:paths}
{Let $G$ be a connected graph with $Q\subseteq V(G)$, where $|Q|$ is even. 
Then $G$ has a spanning forest $F$ such that $Q=\{v\in V(G):d_F(v) \text{ is odd}\}$, and for each vertex $v$, 
$$d_F(v)\le 
 \begin{cases}
 \big\lceil\frac{d_G(v)}{k}\big\rceil +1,	&\text{if $G$ is $k$-edge-connected};\\ 
 \big\lceil\frac{d_G(v)}{k}\rceil,	&\text{if $G$ is $k$-tree-connected}.
\end {cases}$$
Furthermore, for an arbitrary given vertex $u$, the upper bound can be reduced to $\lfloor d_G(u)/k\rfloor$.
}\end{cor}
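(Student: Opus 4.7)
The plan is to apply Corollary \ref{cor:Devote} with a carefully chosen positive integer-valued function $f$, and to verify its hypothesis using the two bounds supplied by Lemma \ref{lem:Omega(GS)}. Since Corollary \ref{cor:Devote} returns a spanning $f$-forest $F$ with $d_F(v)$ odd if and only if $v\in Q$, and any $f$-forest automatically satisfies $d_F(v)\le f(v)$, choosing $f(v)$ equal to the right-hand side of the claimed bound immediately yields the desired forest.

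Concretely, in the $k$-edge-connected case I would set $f(v)=\lceil d_G(v)/k\rceil+1$, which is clearly a positive integer. For a nonempty $S\subseteq V(G)$, Lemma \ref{lem:Omega(GS)} gives
$$\omega(G\setminus S)\le \sum_{v\in S}\frac{d_G(v)}{k}\le \sum_{v\in S}\Big\lceil\frac{d_G(v)}{k}\Big\rceil=\sum_{v\in S}(f(v)-1),$$
so adding $1$ on the right preserves the inequality; for $S=\emptyset$ we have $\omega(G)=1\le 1$ by the connectedness implied by $k$-edge-connectedness. Hence the hypothesis of Corollary \ref{cor:Devote} is met.

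In the $k$-tree-connected case I would set $f(v)=\lceil d_G(v)/k\rceil$. Here $f$ is a positive integer because a $k$-tree-connected graph on at least two vertices has minimum degree at least $k$ (the degenerate one-vertex case is trivial, with $Q=\emptyset$). For any $S\subseteq V(G)$, the second bound of Lemma \ref{lem:Omega(GS)} gives
$$\omega(G\setminus S)\le \sum_{v\in S}\Big(\frac{d_G(v)}{k}-1\Big)+1\le \sum_{v\in S}\Big(\Big\lceil\frac{d_G(v)}{k}\Big\rceil-1\Big)+1=\sum_{v\in S}(f(v)-1)+1,$$
which is again exactly the hypothesis of Corollary \ref{cor:Devote}.

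In both cases Corollary \ref{cor:Devote} then produces a spanning $f$-forest $F$ with the prescribed parity at every vertex, giving the required bound $d_F(v)\le f(v)$. There is no genuine obstacle here; the only small technical matters are the positivity of $f$ in the $k$-tree-connected case (handled by the minimum-degree remark above) and the trivial verification for $S=\emptyset$ in the $k$-edge-connected case.
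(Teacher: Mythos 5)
Your proposal is correct and takes essentially the same approach as the paper's proof, which is stated tersely as ``Apply Lemma~\ref{lem:Omega(GS)} and Corollary~\ref{cor:Devote}.'' You have simply filled in the routine details of choosing $f$ and verifying the hypothesis of Corollary~\ref{cor:Devote} via Lemma~\ref{lem:Omega(GS)}, including the minor positivity and degenerate-case checks.
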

\begin{proof}
{Since $G$ is connected, it is obvious that $ \omega (G\setminus \emptyset)= 1$. 
Let $S$ be a nonempty subset of $V(G)$.
If $G$ is $k$-edge-connected, then
by Lemma~\ref{lem:Omega(GS)}, we have 
$$\omega(G\setminus S)\le
 \sum_{v\in S}\frac{d_G(v)}{k}< 
 \sum_{v\in S}(f(v)-1)+2,$$
where $f(u)= \lceil\frac{d_G(u)+1}{k}\rceil-1$ and $f(v)= \lceil\frac{d_G(v)}{k}\rceil+1$ for all $v\in V(G)\setminus \{u\}$.
Note that $f(u)=\lfloor \frac{d_G(u)}{k}\rfloor$.
If $G$ is $k$-tree-connected, then by Lemma~\ref{lem:Omega(GS)}, we also have 
$$\omega(G\setminus S)\le \sum_{v\in S}(\frac{d_G(v)}{k}-1)+1< 
\sum_{v\in S}(f(v)-1)+2,$$
where $f(u)= \lceil\frac{d_G(u)+1}{k}\rceil-1$ and $f(v)= \lceil\frac{d_G(v)}{k}\rceil$ for all $v\in V(G)\setminus \{u\}$.
Hence the assertions follow from Corollary~\ref{cor:Devote}.
}\end{proof}
%
%
%
%
%
%
%
%
%
%
%
%
%
%
%
\subsection{Jackson-Wormald Conjecture is true}
The following theorem gives a sufficient condition
 for the existence of spanning $f$-walks.
Note that under this condition, the desired spanning $f$-walk is not necessarily closed.
For example, consider two copies of the complete graph of odd order $n$ with $n\ge 3$ and add a perfect matching $M$ between them.
The resulting connected graph $G$  does not have a spanning closed $1$-walk
 passing through the edges $M$, while satisfies $\omega(G\setminus S)\le 2$ for all vertex sets $S$.
\begin{thm}\label{thm:walk:spanning}
{Let $G$ be a connected graph and let $f$ be a positive integer-valued function on $V(G)$.
If for all $S\subseteq V(G)$,
$$\omega(G\setminus S)\le \sum_{v\in S} (f(v)-1)+2,$$
then $G$ admits a spanning $f$-walk  passing through the edges of an arbitrary given matching
(and a spanning  $f$-walk  passing through the edges of an arbitrary given  connected $(1,f+1)$-factor).
}\end{thm}
\begin{proof}
{By Corollary~\ref{cor:matching}, the graph $G$ has a spanning $(f+1)$-tree $T$ containing an arbitrary given matching
(to prove the second assertion, $T$ can play the role of  an arbitrary given  connected $(1,f+1)$-factor).
Let $G_0$ be the graph obtained from $G$ by adding a new vertex $v_0$ and joining it to all other vertices. Define $f(v_0)=2$. 
It is easy to check that for every $S\subseteq V(G_0)$, $\omega(G_0\setminus S)\le \sum_{v\in S} (f(v)-1)+1$,  regardless of  $v_0\in S$ or not.
Thus by Corollary~\ref{cor:Devote}, the graph $G_0$ contains a spanning $f$-forest $F_0$ such that $d_{F_0}(v_0)\in \{0,2\}$ and for each $v\in V(G)$, 
$d_{F_0}(v)$ and $d_T(v)$ have the same parity.
Let $F$ be the factor of $G$ obtained from $F_0$ by removing the vertex $v_0$. 
Insert a new copy of $F$ into $T$ and call the resulting connected graph $H$.
Obviously, for each 
$v\in V(G)\setminus A$, $d_H(v)$ must be even and $d_H(v)= d_F(v)+d_T(v)\le 2f(v)+1$, where $A$ is the set of neighbours of $v_0$ in $F_0$. Note that $|A|\in \{0,2\}$.
Moreover, for each 
$v\in  A$,  $d_F(v)\le f(v)-1$ and so $d_{H}(v)= d_F(v)+d_T(v)\le 2f(v)$.
Therefore, the graph $H$ admits a spanning  $f$-trail and so $G$ admits a spanning $f$-walk.
}\end{proof}
To guarantee the existence of spanning closed $f$-walks, we need to push down the upper bound in Theorem~\ref{thm:walk:spanning} only by one
 as the next theorem. 
\begin{thm}\label{thm:walk:proving-conjecture}
{Let $G$ be a graph and let $f$ be a positive integer-valued function on $V(G)$.
If for all $S\subseteq V(G)$,
$$\omega(G\setminus S)\le \sum_{v\in S} (f(v)-1)+1,$$
then $G$ admits a spanning closed $f$-walk passing through the edges of an arbitrary given matching
(and a spanning closed $f$-walk  passing through the edges of an arbitrary given connected $(1,f+1)$-factor).
}\end{thm}
\begin{proof}
{The graph $G$ must automatically be connected, because $\omega(G\setminus \emptyset )=1$. 
Thus by Corollary~\ref{cor:matching}, the graph $G$ has a spanning $(f+1)$-tree $T$ containing an arbitrary given matching
(to prove the second assertion, $T$ can play the role of  an arbitrary given  connected $(1,f+1)$-factor).
By Corollary~\ref{cor:Devote}, the graph $G$ contains a spanning $f$-forest $F$ such that for each vertex $v$, 
$d_{F}(v)$ and $d_T(v)$ have the same parity.
Insert a new copy of $F$ into $T$ and call the resulting connected graph $H$.
For each vertex $v$, $d_H(v)$ must be even and $d_H(v)= d_F(v)+d_T(v)\le 2f(v)+1$.
Therefore, the graph $H$ admits a spanning closed $f$-trail and so $G$ admits a spanning closed $f$-walk.
}\end{proof}
\begin{cor}
{A simple graph $G$ admits a spanning closed $2$-walk passing through the edges of an arbitrary given  factor $F$ with maximum degree at most $2$,
if for all $S\subseteq V(G)$, $\omega(G\setminus S)\le \frac{1}{4}|S|+1$.
}\end{cor}
\begin{proof}
{By applying Corollary~\ref{cor:path-cycle},  $F$ can be extended to connected factor $H$ with $\Delta(H)\le 3$.
Thus by Theorem~\ref{thm:walk:proving-conjecture}, the graph $G$ admits a spanning closed $2$-walk passing through the edges of $H$ and so does $F$.
}\end{proof}
%
%
%
%
%
The following corollary is an immediate consequence of Lemma~\ref{subsection:conclusions:lem} and Theorem~\ref{thm:walk:proving-conjecture}.
\begin{cor}{\rm(\cite{MR1126990})}
{Every connected $K_{1,n}$-free simple graph with $n\ge 2$ has a spanning closed $(n-1)$-walk.
}\end{cor}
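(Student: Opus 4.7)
The plan is to derive this corollary as a direct combination of two tools from earlier in the paper, namely the structural inequality in Lemma~\ref{subsection:conclusions:lem} and the walk-existence result Theorem~\ref{thm:walk:proving-conjecture} (which is the resolution of the Jackson--Wormald conjecture). I will apply Theorem~\ref{thm:walk:proving-conjecture} with the constant function $f\equiv n-1$ on $V(G)$; under this choice, the hypothesis required there becomes
$$\omega(G\setminus S)\le \sum_{v\in S}(f(v)-1)+1=(n-2)|S|+1 \qquad \text{for all } S\subseteq V(G).$$

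The first step would be to verify this inequality for $G$. For nonempty $S$ it is exactly the content of Lemma~\ref{subsection:conclusions:lem}; for $S=\emptyset$ it reads $\omega(G)\le 1$, which holds because $G$ is connected. The second step is then a one-line invocation of Theorem~\ref{thm:walk:proving-conjecture}, which produces an $f$-walk in $G$, i.e.\ a spanning closed walk meeting each vertex at most $n-1$ times, as desired. If one wishes, the prescribed-matching refinement of Theorem~\ref{thm:walk:proving-conjecture} even yields the slightly stronger statement that the $(n-1)$-walk can be required to traverse any preselected matching.

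There is no real obstacle; the substantive work has already been done in the two cited results. The only mild point to watch is the edge case $S=\emptyset$ in Lemma~\ref{subsection:conclusions:lem}, which is handled by the connectedness hypothesis on $G$ and is why $n\ge 3$ (rather than a hypothesis on $G$ beyond connectedness) is all that is needed.
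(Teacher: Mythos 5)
Your proposal is correct and matches the paper's own proof exactly: the paper proves this corollary by the same two-step combination of Lemma~\ref{subsection:conclusions:lem} (to verify the hypothesis with $f\equiv n-1$) and Theorem~\ref{thm:walk:proving-conjecture}. The edge case $S=\emptyset$ is already covered by the statement of Lemma~\ref{subsection:conclusions:lem}, so that discussion was unnecessary, but it does not harm the argument.
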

The next corollary improves Theorem 4.2 in~\cite{MR1126990} and implies Corollary 3.1 in~\cite{MR2771155}.
Note that there are infinitely many $k$-connected $K_{1,n}$-free simple graphs
with $k\ge 2$ and $n\ge 3$ having no spanning closed $\lfloor \frac{n-1}{k}\rfloor$-walks,
 which were constructed by
Jin and Li~\cite{MR2037342}.
\begin{cor}
{Every $k$-connected $K_{1,n}$-free simple graph with $n\ge 2$ has a spanning closed $(\lceil \frac{n-1}{k}\rceil+1)$-walk.
}\end{cor}
\begin{proof}
{We repeat the proof of Theorem 4.2 in~\cite{MR1126990}.
Let $S$ be a cutset of $G$.
Since $G$ is $k$-connected, every component of $G\setminus S$ is joined to at least $k$ vertices in $S$.
Since $G$ is $K_{1,n}$-free, every vertex of $S$ is joined to at most $n-1$ components of $G\setminus S$.
Hence $ \omega(G\setminus S)k\le (n-1)|S|$.
Therefore, for all vertex sets $S$, $\omega(G\setminus S)\le (n-1)|S|/k+1$,  regardless of  $S$ is a cutset or not.
Hence the assertion follows from Theorem~\ref{thm:walk:proving-conjecture} with replacing $\lceil (n-1)/k\rceil +1$ instead of $f(v)$. 
}\end{proof}
The following result confirms  Conjecture 2.1 in~\cite{MR1126990}.
Note that there are infinitely many graphs with toughness approaching 
$\frac{1}{n-5/8}$ having no spanning closed $n$-walks, which were constructed 
by Ellingham and Zha~\cite{MR1740929}.
\begin{cor}\label{cor:walk:proving-conjecture}
{Every $\frac{1}{n-1}$-tough graph with $n\ge 2$ admits a spanning closed $n$-walk. 
}\end{cor}
\begin{proof}
{If $S\subseteq V(G)$, then by the assumption, we have $\omega (G\setminus S)\le \max\{1,(n-1)|S|\}\le (n-1)|S|+1$.
Thus the assertions follows from Theorem~\ref{thm:walk:proving-conjecture} with setting $f(v)=n$.
}\end{proof}
The next result confirms  Conjecture 23 in~\cite{MR1871346}.
Note that there are infinitely many $r$-edge-connected $r$-regular 
simple graphs with $r\ge 3$ having no spanning closed $1$-walks,
which were constructed by Meredith~\cite{MR0311503}.
\begin{cor}
{Every $r$-edge-connected $r$-regular graph admits a spanning closed $2$-walk.
}\end{cor}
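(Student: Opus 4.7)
The plan is to deduce this directly from Theorem~\ref{thm:walk:proving-conjecture} applied with the constant function $f\equiv 2$. Under this choice the sufficient condition of that theorem reduces to
$\omega(G\setminus S)\le |S|+1$ for every $S\subseteq V(G)$,
so the entire task is to verify this inequality for an $r$-edge-connected $r$-regular graph $G$ (the statement is vacuous for $r=0$, so we may assume $r\ge 1$).

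For $S=\emptyset$ the inequality reads $\omega(G)\le 1$, which holds because $r$-edge-connectedness with $r\ge 1$ forces $G$ to be connected. For $S\neq\emptyset$, I would run the routine edge-count. Let $C_1,\ldots,C_c$ be the components of $G\setminus S$, and fix any index $i$. Since $S$ separates $V(C_i)$ from $V(G)\setminus V(C_i)$, every edge of $G$ leaving $V(C_i)$ must have its other end in $S$; by $r$-edge-connectedness there are at least $r$ such edges per component. Summing over $i$, the number of edges of $G$ with exactly one end in $S$ is at least $rc$. On the other hand, $r$-regularity bounds this same quantity by $\sum_{v\in S}d_G(v)=r|S|$, so $rc\le r|S|$, i.e., $c\le |S|$. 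This is actually stronger than the required $c\le |S|+1$ and is equivalent to saying that $G$ is $1$-tough.

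Having verified the hypothesis of Theorem~\ref{thm:walk:proving-conjecture}, the existence of a $2$-walk in $G$ is immediate. Equivalently, the two verifications above express that $G$ is $1$-tough, so one could instead invoke the $n=2$ case of Corollary~\ref{cor:walk:proving-conjecture} directly. There is no real obstacle here: the substantive work lives in Theorem~\ref{thm:walk:proving-conjecture} (or its Jackson--Wormald corollary), and this corollary reduces to a one-line degree count combined with $r$-edge-connectedness.
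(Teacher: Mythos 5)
Your proof is correct and follows essentially the same route as the paper: establish that an $r$-edge-connected $r$-regular graph is $1$-tough and then invoke Corollary~\ref{cor:walk:proving-conjecture} (equivalently Theorem~\ref{thm:walk:proving-conjecture}) with $n=2$. The paper cites Lemma~\ref{lem:Omega(GS)} for the toughness bound, whereas you re-derive the needed special case by a direct edge count, but the underlying argument is the same.
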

\begin{proof}
{Apply Lemma~\ref{lem:Omega(GS)} and Corollary~\ref{cor:walk:proving-conjecture}. 
}\end{proof}
Finally, we propose the following conjecture to make a stronger version for Theorem~\ref{thm:walk:proving-conjecture}.
\begin{conj}
{Let $G$ be a graph and let $f$ be a positive integer-valued function on $V(G)$.
If for all $S\subseteq V(G)$,
$$\omega(G\setminus S)\le \sum_{v\in S} (f(v)-1)+1+\frac{1}{2}\omega(G[S]),$$
then $G$ admits a spanning closed $f$-walk passing through the edges of an arbitrary given  matching.
}\end{conj}
%
%
%
%
%
%
%
%
%
%
%
%
%
%
%
%
\section*{Acknowledgments}
The author would like to thank Ahmadreza G. Chegini and the referees for their helpful comments.

\end{document}